\newtheorem{preproof}{{\bf \indent Proof.}}
\newenvironment{proof}[1]{\begin{preproof}{\rm
               #1}\hfill{$\Box$}}{\end{preproof}}
\newtheorem{prop}{\bf\indent Proposition}[section]
\newtheorem{defn}[prop]{\bf\indent Definition}
\newtheorem{cor}[prop]{\bf\indent Corollary}
\newtheorem{example}[prop]{\bf\indent Example}
\newtheorem{thm}[prop]{\bf\indent Theorem}
\title{\bf  \large On pseudo-absorbing primary multiplication modules\\ over pullback rings\thanks
{{\it Key Words}:  Pseudo-absorbing primary multiplication modules, 2-absorbing primary ideals, Separated modules, Pullback ring.} \thanks
{\indent{~~2010 {\it Mathematics Subject Classification}: 13C05, 16D70, 03C45.}}}
\author{{\normalsize {\sc M.J. Nikmehr${}^{\mathsf{a}}$}\thanks{Corresponding author} , {\sc R. Nikandish${}^{\mathsf{b}}$ and {\sc A. Yassine${}^{\mathsf{a}}$}  }
}\vspace{3mm}\\
{\footnotesize{}}\\
{\footnotesize{}}\\
{\footnotesize{${}^{\mathsf{a}}$\it Faculty of Mathematics, K.N. Toosi
University of Technology, }}\\
{\footnotesize{\rm P.O. BOX \rm{16315-1618}, Tehran, Iran}}\\
{\footnotesize{ $\mathsf{nikmehr@kntu.ac.ir}$}}\quad\quad
{\footnotesize{$\mathsf{yassine\_ali@email.kntu.ac.ir}$}}\\
{\footnotesize{${}^{\mathsf{b}}$\it Department of Mathematics, Jundi-Shapur University of Technology,}}\\
{\footnotesize{\rm P.O. BOX \rm{64615-334},
Dezful, Iran}}\\
{\footnotesize{ $\mathsf{r.nikandish@ipm.ir}$}}\\
{\footnotesize{$\mathsf{}$ }}}
\date{}
\begin{document}

\maketitle
\begin{abstract}
{A famous result due to L. S. Levy provides a classification of all finitely generated indecomposable modules over Dedekind-like rings. This motivates us to outline an approach to the classification of indecomposable pseudo-absorbing primary multiplication modules with finite-dimensional top over certain kinds of pullback rings. In this paper, we give a complete classification, up to isomorphism, of all indecomposable pseudo-absorbing primary multiplication modules with finite-dimensional top over a pullback of two valuation domains with the same residue field. We also find a connection between pseudo-absorbing primary multiplication modules and pure-injective modules over such domains.}
\end{abstract}
\begin{center}{\section{Introduction
}}\end{center}
\par
A basic problem in mathematics is to classify arbitrary modules one is studying up to isomorphism. A lesson many authors learned from representation types (finite, tame, or wild) is that while this is almost always impossible (see \cite{1,28,29} and \cite[Chapter 19]{30}), it is sometimes possible, and very useful, to classify certain full subcategories of the category one is working in. In particular, the pure-injective modules and also indecomposable representable modules have been classified \cite{Ebrahimipure,Ebrahimi}. Indeed, the classification of the complete theories of modules reduces to classifying the pure-injectives. Notably, there are important classifications of modules like the classification of finite-dimensional and finitely generated modules, in many cases this classification can be extended to classify pure-injective modules. In fact, there is a connection between infinitely generated pure-injective modules and finitely generated modules in many cases (see \cite{5,15,18}). The purpose of this paper is to introduce a class of modules called pseudo-absorbing primary multiplication modules which form an interesting subclass of pure-injective modules.

Modules over pullback rings have been studied by several authors (see \cite{Bass,Dolati,Ebrahimipure,Ebrahimisemiprime,EsmaeiliSaraei,Kirichenko,Klingler} and \cite{Wiseman}). The first important contribution in this direction is due to Levy \cite{Levy2}, who classified all finitely generated indecomposable modules over Dedekind-like rings. Later, this classification was extended by Klingler \cite{Klingler} to lattices over certain non-commutative Dedekind-like rings, and then Haefner and Klingler classified lattices over certain non-commutative pullback rings, which they called special quasi-triads \cite{Haefner1,Haefner2}. The common of all these classifications is the reduction to a ``matrix problem"  over a division ring \cite{Drozd}.

We assume throughout this paper that all rings are commutative with identity. Recall the construction: if $R_1$ and $R_2$ are two  discrete valuation domains with maximal ideals $P_1 = Rp_1$ and $P_2 = Rp_2$ respectively, then 
$R = (R_1 \xrightarrow{\mathcal{V}_1(x)} \overline{R} \xleftarrow{\mathcal{V}_2(x)} R_2)$
 is the pullback ring of $R_1$ and $R_2$ with a unique maximal ideal $P = P_1\oplus P_2$. Note that  $\overline{R} \cong R/P \cong R_1/P_1\cong R_2/P_2$  is a field. This means that $R$ is a commutative Noetherian local ring with unique maximal ideal $P$. So it is easy to see that $P_1$ and $P_2$ are the other prime ideals of $R$. The elements of this ring are of the form $a = (r, s)$ where $r\in R_1$ and $s\in R_2$ such that $\mathcal{V}_1(r) = \mathcal{V}_2(s)$. If $r$ and $s$ are nonzero, then $a = (p_1^n, p_2^m)$ for some positive integers $m,n$, which means that ann$(a) = 0$. Therefore $Ra \cong R$. But if $a = (0, p_2^m)$ ($a = (p_1^n, 0)$) for some positive integers $m,n$, then ann$(a) =  P_1\oplus 0$ (ann$(a) =0\oplus P_2$), and thus $R/(P_1 \oplus 0)\cong R(0, p_2^m)\cong R_2$ ($R/(0 \oplus P_2)\cong R(p_1^n, 0)\cong R_1$). The other ideals $I$ of $R$ are of the form $I = P_1^n \oplus P_2^m = (P^n_1, P^m_2) = (<p^n_1>, <p^m_2>)$ for some positive integers $m,n$.

Following some ideas and a technique introduced in S. Ebrahimi Atani et al. \textit {Pseudo-absorbing multiplication modules over a pullback ring} (J. Pure Appl. Algebra 222  (2018), 3124--3136), in this paper we introduce a class of modules called pseudo-absorbing primary multiplication modules which form a subclass of pure-injective modules (see Definition \ref{*}), and contains the class of pseudo-absorbing multiplication modules \cite{Dolati} and the class of pseudo-prime multiplication modules \cite{EsmaeiliSaraei}. We are mainly interested to study these modules in the two following cases: where $R$ is a Dedekind domain or a pullback ring of two local Dedekind domains. In section 2, a complete description of pseudo-absorbing primary multiplication modules over a discrete valuation domain and some of their basic properties are given. The classification of these modules is divided into two steps: indecomposable separated pseudo-absorbing primary multiplication and indecomposable non-separated pseudo-absorbing primary multiplication $R$-modules. In section 3, the indecomposable separated pseudo-absorbing primary multiplication modules over a pullback of two local Dedekind domains with finite-dimensional top over $R/Rad(R)$ are completely described. In section 4, we use the list of separated pseudo-absorbing primary multiplication modules obtained in section 3 and results of Levy \cite{Levy2,Levy3} on the possibilities of amalgamating finitely generated separated modules to show that non-separated indecomposable pseudo-absorbing primary multiplication $R$-modules with finite-dimensional top are precisely  finite direct sums of separated indecomposable separated pseudo-absorbing primary multiplication $R$-modules. We will see also that non-separated modules may be represented by certain amalgamation chains of indecomposable separated pseudo-absorbing primary multiplication modules and this abutment corresponds to amalgamation in the socles of these separated pseudo-absorbing primary multiplication modules.

For completeness, we define the concepts that will be needed. A proper ideal $I$ of a commutative ring $R$ is called \textit{$2$-absorbing primary} if whenever $a, b, c \in R$ and $abc \in I$, then $ab \in I$ or $ac \in \sqrt{I}$ or $bc \in \sqrt{I}$ \cite{Badawi2}. This notion was extended to the concept of \textit{$2$-absorbing primary submodule} in \cite{Mostafanasab} which is a generalization of prime submodule with the following definition, if whenever $a, b \in R$ and $m \in M$ and $abm \in N$ for some proper submodule $N$ of $M$, then $am \in N$ or $bm \in N$ or $ab \in \sqrt{(N : M)}$. An $R$-module $M$ is called \textit{multiplication}  if for each submodule $N$ of $M$, $N = IM$, for some ideal $I$ of $R$ \cite{Bast}. A submodule $N$ of an $R$-module $M$ is said to be \textit{pure}  if any finite system of equations over $N$ which is solvable in $M$ is also solvable in $N$. A submodule $N$ of an $R$-module $M$ is said to be \textit{relatively divisible} in $M$ if $rN = N \cap rM$ for all $r \in R$ \cite{Prest}. A module $M$ is said to be \textit{pure-injective} if it has the injective property relative to all pure exact sequences \cite{Ebrahimipure,Prest}. An $R$-module $S$ is said to be \textit{separated} if there exist $R_i$-modules $S_i$, $i = 1,2$, such that $S$ is a submodule of $S_1 \oplus S_2$. Let $R$ be a pullback ring as above. Equivalently, $S$ is \textit{separated} if and only if $P_1S \cap P_2S = 0$ \cite{Levy}. Let $M$ be an $R$-module. A \textit{separated representation} of $M$ is an epimorphism $\varphi: S \rightarrow M$ of $R$-modules where $S$ is separated and $\varphi$ is an $R$-homomorphism, and if $\varphi$ admits a factorization $\varphi = (S \xrightarrow{f} S' \rightarrow M)$ such that $S'$ is separated, then $f$ is one-to-one. The module $K = $ Ker$(\varphi )$ is an $\overline{R}$-module, since $\overline{R} = R/P$ and $PK = 0$ \cite[Proposition 2.3]{Levy}. An exact sequence $0 \rightarrow K \xrightarrow{i} S \xrightarrow{\varphi} M \rightarrow 0$ of $R$-modules with $S$ separated and $K$ an $\overline{R}$-module is a separated representation of $M$ if and only if $P_iS \cap K = 0$ for each $i$ and $K \subseteq PS$ \cite[Proposition 2.3]{Levy3}. It follows from \cite[Theorem 2.8]{Levy} that every module $M$ has a separated representation, which is unique up to isomorphism. For undefined notation and terminology, the reader is referred to \cite{Ebrahimipure,Ebrahimi,Dolati}.

\vspace*{1cm}
\begin{center}{\section{Some properties of pseudo-absorbing primary multiplication modules
}}\end{center}
In this section, the notion of pseudo-absorbing primary multiplication modules over a discrete valuation domain is introduced and
some of their basic properties are given.

\begin{defn} \label{*}
Let $R$ be a ring and $M$ be an $R$-module. A proper submodule $N$ of $M$ is said to be pseudo-absorbing primary if $(N :_R M)$ is a $2$-absorbing primary ideal of $R$. Furthermore, $M$ is said to be a pseudo-absorbing primary multiplication module if for every pseudo-absorbing primary submodule $N$ of $M$, there is an ideal $I$ of $R$ such that $N = IM$.
\end{defn}

It is easy to see that if $N$ is a pseudo-absorbing primary submodule of a pseudo-absorbing primary multiplication module $M$, then there exists an ideal $I$ of $R$ such that $N = IM$, and so  $N = IM \subseteq (N :_R M)M\subseteq N$. Hence $N = (N :_R M)M$ for every pseudo-absorbing primary submodule $N$ of $M$. Since every prime ideal is a $2$-absorbing ideal and every $2$-absorbing ideal is a $2$-absorbing primary ideal, one can easily see that the class of pseudo-absorbing primary multiplication modules contains the class of pseudo-absorbing multiplication modules and the class of pseudo-absorbing multiplication modules contains the class of pseudo-prime multiplication modules. 

The following example shows that a pseudo-absorbing primary submodule of $M$ need not be a primary (prime) or pseudo-absorbing or $2$-absorbing primary submodule of $M$.

\begin{example} {\rm 
$(1)$	 Let $M = \mathbb{Z}_{16} = \{0, 1, 2, \ldots , 15\}$ be an $R$-module such that $R = \mathbb{Z}$ and $N = \{0, 8\}$ a submodule of $M$. Then $(N :_R M) = 8\mathbb{Z}$ and $\sqrt{(N :_R M)} = 2\mathbb{Z}$. It is easy to see that $N$ is a $2$-absorbing primary submodule of $M$, and so it is a pseudo-absorbing primary submodule of $M$. Now, $2\cdot 2\cdot 2 \in (N :_R M)$ but $2\cdot 2 \notin (N :_R M)$. Hence, $N$ is a pseudo-absorbing primary submodule of $M$ which is not pseudo-absorbing.

$(2)$ Let $N = 6\mathbb{Z}$ be a submodule of a $\mathbb{Z}$-module $\mathbb{Z}$. Since $N$ is a $2$-absorbing primary submodule of $M$, $N$ is a pseudo-absorbing primary submodule of $M$. But it is not a primary (prime) submodule, as $2\cdot 3 \in N$, whereas $2 \notin N$ and $3 \notin \sqrt{(N :_R M)}$.

$(3)$ It is easy to see that if $N$ is a $2$-absorbing primary submodule of an $R$-module $M$, then $N$ is a pseudo-absorbing primary submodule of $M$. Asuume that $R$ is a  discrete valuation domain with  maximal deal $P = Rp$. It follows from \cite[Lemma 2.6]{Ebrahimi} that every non-zero proper submodule $L$ of $E = E(R/P)$, the injective hull of $R/P$, is of the form $L = A_n = (0 :_E P^n)$ $(n \geq 1)$, $L = A_n = Ra_n$ and $PA_{n+1} = A_n$. One can see that no $A_n$ is a 2-absorbing primary submodule of $E$,  for otherwise, if $n$ is a positive integer, then $P^2A_{n+2} = A_n$, and neither $PA_{n+2} = A_{n+1} \subseteq A_n$ nor $P^2 \subseteq \sqrt{(A_n : E)} = 0$. Hence no $A_n$ is a 2-absorbing primary submodule of $E$. Now, $(A_n : E) = 0$ implies $A_n$ is a pseudo-absorbing primary submodule for all $n \geq 1$, and thus a pseudo-absorbing primary submodule of $M$ need not be a $2$-absorbing primary submodule of $M$.

}
\end{example}

\begin{prop} \label{2.3}
Suppose that $R$ is a  ring and $N \subseteq K$ are submodules of an $R$-module $M$. Then the
following statements hold:

$(1)$  If $K$ is a pseudo-absorbing primary submodule of $M$, then $K/N$ is apseudo-absorbing primary submodule of $M/N$.

$(2)$  If $I$ is an ideal of $R$ such that $I \subseteq (0 :_R M)$ and $K$ is a pseudo-absorbing primary submodule of $M$, then $K$ is a pseudo-absorbing primary submodule of $M$ as an $R/I$-module.

$(3)$ If $K$ is a $2$-absorbing primary and pseudo-prime submodule of $M$, then $K$ is a $2$-absorbing submodule of $M$. The converse is true whenever prime ideals of $R$ are comparable; in particular, $R$ is local with maximal
ideal $M$ and for every minimal prime ideal $P$ over a $2$-absorbing ideal $I$, we have $IM = P$.
\end{prop}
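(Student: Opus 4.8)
The plan is to reduce each of the three parts to a statement about the colon ideal $(-:_R M)$, since ``pseudo-absorbing primary'' is, by definition, a property of that ideal. For part $(1)$ the only ingredient is the identity $(K/N :_R M/N) = (K :_R M)$, which holds because $N \subseteq K$: for $r \in R$ one has $r(M/N) \subseteq K/N \iff rM + N \subseteq K \iff rM \subseteq K$. Since $K$ is proper in $M$, $K/N$ is proper in $M/N$, and therefore $(K/N :_R M/N)$ equals the $2$-absorbing primary ideal $(K :_R M)$, which is exactly what is required.

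For part $(2)$, note first that $M$ is an $R/I$-module since $IM = 0$, and that $I \subseteq (0 :_R M) \subseteq (K :_R M)$; computing the colon in the $R/I$-module $M$ then gives $(K :_{R/I} M) = (K :_R M)/I$. So it suffices to record the purely ideal-theoretic fact that if $J$ is a $2$-absorbing primary ideal of $R$ with $I \subseteq J$, then $J/I$ is a $2$-absorbing primary ideal of $R/I$; this follows by unwinding the definition together with $\sqrt{J/I} = \sqrt{J}/I$. Applying it with $J = (K :_R M)$ finishes part $(2)$.

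Part $(3)$ is the substantive one. For the forward implication, ``pseudo-prime'' means $(K :_R M)$ is a prime ideal, hence $\sqrt{(K :_R M)} = (K :_R M)$; substituting this equality into the defining condition of a $2$-absorbing primary submodule ($abm \in K$ forces $am \in K$, or $bm \in K$, or $ab \in \sqrt{(K:_R M)}$) turns it verbatim into the defining condition of a $2$-absorbing submodule ($abm \in K$ forces $am \in K$, or $bm \in K$, or $ab \in (K :_R M)$), so $K$ is $2$-absorbing. For the converse, assume the prime ideals of $R$ form a chain and $K$ is $2$-absorbing. Then $K$ is automatically $2$-absorbing primary because $(K :_R M) \subseteq \sqrt{(K :_R M)}$, so it remains to show $K$ is pseudo-prime, i.e. that $I := (K :_R M)$ is prime. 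A standard argument (no module is the union of two proper submodules) shows $I$ is a $2$-absorbing ideal; recalling that a $2$-absorbing ideal has at most two minimal primes and that its radical is their intersection, comparability forces $\sqrt{I}$ to be a single prime ideal $P$. The remaining gap — that $I$ itself, not just $\sqrt I$, is prime — is precisely where the extra hypothesis of the statement enters: in the local multiplication-module setting one has $K = IM$ together with the relation identifying $IM$ with the prime $P$, and the ideal--submodule correspondence supplied by the multiplication property then yields $I = P$, so $I$ is prime and $K$ is pseudo-prime.

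The step I expect to be the real obstacle is exactly this last point in the converse of $(3)$: comparability of the prime spectrum of $R$ only delivers that $\sqrt{(K :_R M)}$ is prime, and it genuinely can fail that a $2$-absorbing ideal be prime (for instance the square of the maximal ideal of a valuation domain is $2$-absorbing but not prime), so one is forced to exploit the multiplication structure of $M$ to descend from $P$ back to $(K :_R M)$ and close the argument. Parts $(1)$ and $(2)$, by contrast, are routine colon-ideal bookkeeping.
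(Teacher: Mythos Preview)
Your treatment of parts $(1)$, $(2)$, and the forward implication of $(3)$ is correct and coincides with the paper's argument: all three reduce to the colon-ideal identities $(K/N:_R M/N)=(K:_R M)$ and $(K:_{R/I}M)=(K:_R M)/I$, together with the observation that for a prime ideal $\sqrt{(K:_R M)}=(K:_R M)$, which collapses the $2$-absorbing primary condition to the $2$-absorbing one. The paper phrases the forward implication of $(3)$ by taking $abm\in K$ with $am,bm\notin K$, obtaining $(ab)^n\in(K:_R M)$, and using primeness of $(K:_R M)$ to extract $ab\in(K:_R M)$; your formulation via $\sqrt{(K:_R M)}=(K:_R M)$ is the same argument repackaged.

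Where you diverge from the paper is in the converse of $(3)$, and the divergence stems from a notational collision in the statement: in the clause ``$R$ is local with maximal ideal $M$ and for every minimal prime ideal $P$ over a $2$-absorbing ideal $I$, we have $IM=P$'', the symbol $M$ denotes the \emph{maximal ideal of the ring}, not the module. The ``in particular'' clause is not an additional hypothesis on the module---it is (a paraphrase of) the ring-theoretic characterisation of rings in which every $2$-absorbing ideal is prime, which is exactly Theorem~2.3 of Bennis--Fahid (the reference \cite{Bennis}). The paper's proof of the converse is simply: $K$ $2$-absorbing $\Rightarrow$ $(K:_R M)$ is a $2$-absorbing ideal, and under the stated ring hypothesis Bennis--Fahid's theorem gives that $(K:_R M)$ is prime, i.e.\ $K$ is pseudo-prime. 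No multiplication-module structure on $M$ is invoked.

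Your instinct that comparability of the prime spectrum alone cannot force a $2$-absorbing ideal to be prime is entirely correct---your DVR example with $P^2$ shows this---and that is precisely why the statement carries the extra ring-theoretic condition and why the paper outsources this step to \cite{Bennis}. Your attempt to close the gap via a multiplication-module argument (``$K=IM$ together with the relation identifying $IM$ with the prime $P$'') is therefore aimed at the wrong target: once the $M$ in ``$IM=P$'' is read as the maximal ideal, the missing step becomes purely ideal-theoretic and is supplied by the cited reference rather than by any property of the module.
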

\begin{proof}
{$(1)$ This is obvious because $(K :_R M) = (K/N :_R M/N)$.

$(2)$ Let $(K :_R M) = Q$ for some $2$-absorbing primary ideal $Q$ of $R$. Since $IM = 0$, $I \subseteq Q$, and hence an inspection will show that $(K :_{R/I} M) = Q/I$. Thus $K$ is a pseudo-absorbing primary submodule of $M$ as an $R/I$-module.

$(3)$	 Suppose that $K$ is a $2$-absorbing submodule of $M$. It follows from \cite[Theorem 2.3]{Bennis} that $K$ is a pseudo-prime submodule of $M$. Conversely, suppose that $K$ is a $2$-absorbing primary and pseudo-prime submodule of $M$, and $abm \in K$ for some $a,b \in R$ and $m \in M$. Suppose also that $am, bm \notin K$. Then $(ab)^n \in (K :_R M)$ for some positive integer $n$, since $K$ is a $2$-absorbing primary submodule of $M$. But $K$ is a pseudo-prime submodule of $M$, hence $ab \in (K :_R M)$. Thus $K$ is a $2$-absorbing submodule of $M$.
}
\end{proof}

\begin{prop} \label{242}
Suppose that $f: R \to R'$ is a homomorphism and $M$ is a pseudo-absorbing primary multiplication $R$-module and suppose also that $N$ is a submodule of $M$. Then the following statements hold:

$(1)$ For every ideal $I$ of $R$ such that $I \subseteq (N :_R M)$ and $N$ is a non-zero of $M$, $M/N$ is a pseudo-absorbing primary multiplication $R/I$-module. In particular, $M/N$ is a pseudo-absorbing primary multiplication $R$-module.

$(2)$ Every direct summand of $M$ is a pseudo-absorbing primary multiplication submodule of $M$.

$(3)$ If $I \subseteq (0 :_R M)$ where $I$ is an ideal of $R$, then $M$ is a pseudo-absorbing primary multiplication $R$-module if and
only if $M$ is pseudo-absorbing primary multiplication as an $R/I$-module.

$(4)$ If $M'$ is a pseudo-absorbing primary multiplication $R'$-module, then $M'$ is a pseudo-absorbing primary multiplication $R$-module.
\end{prop}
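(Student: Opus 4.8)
The unifying tool for all four parts is the behaviour of $2$-absorbing primary ideals under a surjective ring map, so I would first record a preliminary fact: for ideals $I\subseteq J$ of $R$, the ideal $J$ is $2$-absorbing primary in $R$ if and only if $J/I$ is $2$-absorbing primary in $R/I$; this is immediate from the defining condition together with the identity $\sqrt{J/I}=\sqrt{J}/I$, and is implicit in \cite{Badawi2}. More generally, for a surjection $f\colon R\to R'$ and an ideal $J'$ of $R'$, the ideal $J'$ is $2$-absorbing primary in $R'$ if and only if $f^{-1}(J')$ is $2$-absorbing primary in $R$, since $f^{-1}(J')\supseteq\ker f$ and $R'\cong R/\ker f$. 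Each of $(1)$--$(4)$ then amounts to transporting a colon-ideal condition across such a map and reading off a multiplication representation.

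For $(1)$, since $I\subseteq(N:_R M)$ we have $IM\subseteq N$, so $M/N$ is an $R/I$-module. Given a pseudo-absorbing primary submodule $L/N$ of $M/N$ over $R/I$ (necessarily $N\subseteq L\subsetneq M$), I would compute $(L/N:_{R/I}M/N)=(L:_R M)/I$; the preliminary fact shows $(L:_R M)$ is $2$-absorbing primary in $R$, hence $L$ is a pseudo-absorbing primary submodule of $M$, so $L=JM$ for some ideal $J$ of $R$ by hypothesis, and finally $L/N=\bigl((J+I)/I\bigr)(M/N)$ because $IM\subseteq N\subseteq L$. Taking $I=0$ gives the ``in particular'' clause. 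Part $(3)$ is the case $N=0$ of the same two computations: the implication from $R$-multiplication to $R/I$-multiplication runs exactly as above with $I\subseteq(0:_R M)$, and the reverse implication uses the other direction of the preliminary fact together with the identity $(J/I)M=JM$, valid when $IM=0$, to turn an $R/I$-representation of a submodule back into an $R$-representation.

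For $(2)$, write $M=M_1\oplus M_2$ and let $N$ be a pseudo-absorbing primary submodule of $M_1$; the point is to lift $N$ to the submodule $N\oplus M_2$ of $M$. One checks $(N\oplus M_2:_R M)=(N:_R M_1)$, which is $2$-absorbing primary and proper, so $N\oplus M_2$ is pseudo-absorbing primary in $M$; hence $N\oplus M_2=IM=IM_1\oplus IM_2$ for some ideal $I$ of $R$, and intersecting both sides with $M_1$ yields $N=IM_1$. For $(4)$, where one takes $f$ surjective (as in Proposition \ref{2.3}$(2)$, so that the $R$-submodules and $R'$-submodules of $M'$ coincide), an $R$-pseudo-absorbing primary submodule $N'$ of $M'$ satisfies $(N':_R M')=f^{-1}\bigl((N':_{R'}M')\bigr)$; the preliminary fact makes $(N':_{R'}M')$ $2$-absorbing primary in $R'$, so $N'=I'M'$ for some ideal $I'$ of $R'$, and then $N'=f^{-1}(I')M'$ with $f^{-1}(I')$ an ideal of $R$.

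The step I expect to require the most care is exactly this passage of $2$-absorbing primary ideals along ring maps in $(3)$ and $(4)$: one must check both that the contraction of a $2$-absorbing primary ideal is again $2$-absorbing primary and that the relevant colon ideal contains the kernel, so that the lattice correspondence applies verbatim; once that is in hand, the module-theoretic content of all four statements is routine direct-sum and quotient bookkeeping.
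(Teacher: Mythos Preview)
Your argument is correct and, for parts $(1)$, $(3)$ and $(4)$, matches the paper's proof almost line for line: the paper packages your ``preliminary fact'' into Proposition~\ref{2.3} (parts $(1)$ and $(2)$) and then runs the same colon-ideal computations and the same pull-back $N'=f^{-1}(I')M'$ for $(4)$. If anything, your treatment of $(4)$ is slightly more careful than the paper's, which jumps from ``$N$ is pseudo-absorbing primary over $R$'' directly to ``$N=IM'$ for some ideal $I$ of $R'$'' without explicitly invoking the correspondence you isolate.

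The one genuine divergence is part $(2)$. You lift a pseudo-absorbing primary submodule $N\subseteq M_1$ to $N\oplus M_2\subseteq M$, check $(N\oplus M_2:_R M)=(N:_R M_1)$, and then intersect an ideal representation back down to $M_1$. The paper instead dispatches $(2)$ in one line by observing that a direct summand $M_1$ of $M$ is isomorphic to the quotient $M/M_2$, so part $(1)$ applies immediately. Your route is self-contained and makes the ideal $I$ explicit; the paper's route is shorter and shows that $(2)$ is not an independent fact but a formal consequence of the quotient statement you already proved.
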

\begin{proof}
{$(1)$ Suppose that $K/N$ is a pseudo-absorbing primary submodule of $M/N$. It follows from Proposition \ref{2.3} $(1)$, $K$ is a pseudo-absorbing primary submodule of $M$, and hence $K = (K : M)M$. This means that $I \subseteq (N :_R M) \subseteq (K :_R M) = J$. An inspection shows that $K/N = (J/I)(M/N)$. For the``in particular" statement take $I = 0$. 

$(2)$ This follows from part $(1)$.

$(3)$	 It follows from Proposition \ref{2.3} $(2)$ that $N$ is a pseudo-absorbing primary submodule of $M$ if and only if $N$ is a pseudo-absorbing primary $R/I$-submodule of $M$. Now the assertion follows from the fact that $(N :_R M) = (N :_{R/I} M)$.

$(4)$ Suppose that $f : R \to R'$ is a surjective homomorphism and $M'$ is a pseudo-absorbing primary multiplication $R'$-module. Suppose also that $N$ is a pseudo-absorbing primary $R$-submodule of $M'$. Then $N = IM'$ for some ideal $I$ of $R'$. Since $f$ is a surjective homomorphism, there is and ideal $J$ of $R$ such that $J = f^{-1}(I)$ with $f(J) = I$. Then $JM' = f(J)M' = IM' = N$. Hence $M'$ is a pseudo-absorbing primary multiplication $R$-module.
}
\end{proof}

In the following result, we show that if $R$ is a  discrete valuation domain with  maximal deal $P$, then the injective hull of $R/P$ and the field of fractions of $R$ are not pseudo-absorbing primary submodules.

\begin{prop} \label{2.5.5}
 Suppose that $R$ is a  discrete valuation domain with  maximal deal $P$. Then the injective hull $E(R/P)$ of $R/P$ and the field of fraction $Q(R)$ of $R$ are not pseudo-absorbing primary multiplication modules. Furthermore, the cyclic modules $R$ and $R/P^n$ are pseudo-absorbing primary multiplication modules.
\end{prop}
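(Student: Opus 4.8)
The plan is to treat the four assertions in turn, leaning on the description of the submodules of $E(R/P)$ recalled in Example 2.2$(3)$ and on the fact that $E(R/P)$ and $Q(R)$ are divisible modules.

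First I would deal with $E = E(R/P)$. By Example 2.2$(3)$, every proper non-zero submodule of $E$ is of the form $A_n = (0 :_E P^n)$ for some $n \ge 1$, these form a strictly ascending chain (so each $A_n$ is indeed proper), and $(A_n :_R E) = 0$. Since $R$ is a domain, $0$ is a prime --- hence $2$-absorbing primary --- ideal of $R$, so each $A_n$ is a pseudo-absorbing primary submodule of $E$. On the other hand $E$, being injective over the Noetherian domain $R$, is divisible; thus $IE = E$ for every non-zero ideal $I$ of $R$, while $0\cdot E = 0$, so the only submodules of $E$ of the form $IE$ are $0$ and $E$. As $A_1$ is a pseudo-absorbing primary submodule equal to neither, $E$ is not a pseudo-absorbing primary multiplication module.

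Next I would handle $Q = Q(R)$. Here $R$ is a proper non-zero $R$-submodule of $Q$, and since $Q$ is a field, $rQ = Q \not\subseteq R$ for every non-zero $r \in R$, so $(R :_R Q) = 0$; again this is prime, hence $2$-absorbing primary, so $R$ is a pseudo-absorbing primary submodule of $Q$. But $Q$ is divisible, so exactly as above $IQ \in \{0, Q\}$ for every ideal $I$ of $R$, and $R$ is neither of these. Hence $Q$ is not a pseudo-absorbing primary multiplication module.

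Finally, for the cyclic modules $R$ and $R/P^n$ I would use that every cyclic module $M = Rm$ is a multiplication module: if $N$ is a submodule and $x = rm \in N$, then $rM = Rx \subseteq N$, so $r \in (N :_R M)$ and $x \in (N :_R M)M$, giving $N = (N :_R M)M$. In particular every pseudo-absorbing primary submodule of $R$ (resp. $R/P^n$) is of the form $IM$, so both are pseudo-absorbing primary multiplication modules. The only step needing any attention is the reduction, in the first two parts, to the observation that $IE$ and $IQ$ can only be the trivial submodules $0$ and the whole module; this is exactly where divisibility of $E$ and $Q$ enters, and it is what obstructs realizing the non-trivial pseudo-absorbing primary submodules $A_n$ and $R$ as $IM$.
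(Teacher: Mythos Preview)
Your proof is correct and follows essentially the same approach as the paper's. The paper packages the divisibility argument as a small lemma (``any divisible pseudo-absorbing primary multiplication module over a domain is simple'') and then observes that $E(R/P)$ and $Q(R)$ are divisible but not simple; you instead exhibit a specific nonzero proper pseudo-absorbing primary submodule in each case ($A_1$ and $R$, respectively) and check it cannot have the form $IM$. These are the same argument, only unpacked differently, and the treatment of the cyclic modules is identical.
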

\begin{proof}
{First we show that every divisible pseudo-absorbing primary multiplication module $M$ over an integral domain $R$ is a simple module. Suppose that $N$ is a proper submodule of $M$. Since $M$ is divisible, $(N :_R M) = 0$, and so $N = (N :_R M)M = 0_M = 0$. Thus $M$ is a simple module. Clearly $E(R/P)$ and $Q(R)$ are divisible modules and by \cite[Lemma 2.6]{Ebrahimi}, $E(R/P)$ has nonzero proper submodules and the submodule $N = \{r/1 : r \in R\}$ is a nonzero proper submodule of $Q(R)$. Therefore $E(R/P)$ and $Q(R)$ are not simple, and hence the injective hull $E(R/P)$ of $R/P$ and the field of fraction $Q(R)$ of $R$ are not pseudo-absorbing primary  multiplication modules. Now, since the cyclic modules $R$ and $R/P^n$ are multiplication, they are also pseudo-absorbing primary multiplication modules.
}
\end{proof}

The following result contains a complete list of indecomposable pseudo-absorbing primary  multiplication modules over local Dedekind domains.

\begin{thm} \label{prev}
Suppose that $R$ is a  discrete valuation domain with  maximal ideal $P = Rp$. Then the indecomposable pseudo-absorbing primary multiplication modules over $R$, up to isomorphism, are:

$(1)$ $R$;

$(2)$ $R/P^n$, $n \geq 1$ the indecomposable torsion modules.

\end{thm}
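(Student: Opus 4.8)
The plan is to prove both inclusions, the forward one being routine and the reverse one resting on a reduction that makes the $2$-absorbing primary hypothesis disappear over a discrete valuation domain. For the forward direction: by Proposition \ref{2.5.5} the cyclic modules $R$ and $R/P^n$ are pseudo-absorbing primary multiplication modules, and each is indecomposable --- $R$ because it is a domain, so its only idempotents are $0$ and $1$, and $R/P^n$ because it is a commutative local ring and hence admits no nontrivial decomposition as a module over itself, equivalently over $R$. So the real work is the converse: every indecomposable pseudo-absorbing primary multiplication $R$-module is one of these.

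The key point I would establish first is that over $R$ the pseudo-absorbing primary condition on proper submodules is automatic. Since the ideals of $R$ are exactly $0$ and the powers $P^n$ $(n \geq 1)$, it suffices to check that every proper ideal of $R$ is $2$-absorbing primary. For $I = 0$ this holds because $R$ is a domain: if $abc = 0$ then one of $a,b,c$ is $0$, so one of $ab, ac, bc$ lies in $0 = \sqrt{0}$. For $I = P^n$ one has $\sqrt{I} = P$, and a short comparison of $p$-adic valuations shows that if $abc \in P^n$ while $ac \notin P$ and $bc \notin P$, then $c$ must be a unit and $ab \in P^n$. Consequently, if $N$ is any submodule of an $R$-module $M$ with $N \neq M$, then $(N :_R M)$ is a proper ideal of $R$, hence $2$-absorbing primary, so $N$ is pseudo-absorbing primary. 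Therefore an $R$-module is a pseudo-absorbing primary multiplication module precisely when it is a multiplication module, and the theorem reduces to classifying the indecomposable multiplication $R$-modules.

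Finally I would show that every nonzero multiplication module $M$ over $R$ is cyclic, which finishes the proof: a cyclic $R$-module is $R/J$ for an ideal $J$, and since $R$ is a domain and each $R/P^n$ a commutative local ring, the nonzero such modules --- namely $R$ (for $J = 0$) and $R/P^n$, $n \geq 1$ (for $J = P^n$) --- are exactly the indecomposable ones. To prove $M$ cyclic, recall that every submodule of $M$ equals $IM$ for some $I \in \{0, R, P, P^2, \dots\}$. If $PM = M$ then $P^n M = M$ for all $n$, so $0$ and $M$ are the only submodules of $M$, whence $M$ is simple and $M \cong R/P$. If $PM \neq M$, choose $m \in M \setminus PM$; then $Rm = IM$ for some ideal $I$, and $Rm \not\subseteq PM$ forces $I \not\subseteq P$, so $I = R$ (as $R$ is local) and $M = Rm$. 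The only delicate point is really the reduction in the previous paragraph: once the $2$-absorbing primary hypothesis is seen to be vacuous over $R$, what remains is the classical fact that a multiplication module over a local principal ideal domain is cyclic.
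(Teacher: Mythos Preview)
Your proof is correct and takes a genuinely different, more elementary route than the paper's. The paper argues directly with the pseudo-absorbing primary multiplication hypothesis: given an indecomposable such module $M$ and $0 \neq a \in M$, it introduces the height $h(a) = \sup\{n : a \in P^nM\}$ and the order ideal $(0:_R a)$, reduces to the case $(0:_R a) \in \{0, P\}$, and then in each of the resulting combinations ($h(a)$ finite or infinite, $(0:_R a) = 0$ or $P$) either exhibits a pure cyclic submodule---invoking Kaplansky's theorem on bounded-order pure submodules to split it off, forcing $M \cong R/P^{n+1}$ or $M \cong R$---or arrives at $E(R/P)$ or $Q(R)$, which are excluded by Proposition~\ref{2.5.5}. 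Your approach instead observes that over a discrete valuation domain every proper ideal is primary (hence $2$-absorbing primary), so the pseudo-absorbing primary condition on proper submodules is vacuous and the class under study is exactly the class of multiplication modules; you then give the standard short argument that a multiplication module over a local ring is cyclic. This buys a substantially shorter, self-contained proof with no appeal to purity, heights, or the case analysis in \cite{Atani}. The paper's approach, by contrast, is structured in parallel with the later pullback arguments (where such a reduction to plain multiplication modules is not available) and reuses machinery already in place for that setting. One cosmetic remark: in your case $PM = M$ you conclude $M \cong R/P$, but then $PM = 0 \neq M$, so this case is in fact vacuous; the overall conclusion is unaffected.
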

\begin{proof}
{It follows from \cite[Proposition 1.3]{Ebrahimipure} and Proposition \ref{2.5.5} that the modules $R$ and $R/P^n$ ($n \geq 1$) are indecomposable pseudo-absorbing primary multiplication. It remains to show that there are no more indecomposable pseudo-absorbing primary multiplication $R$-modules. For otherwise we assume that $M$ is an indecomposable pseudo-absorbing primary multiplication module, and suppose that $0\neq a \in M$. Suppose also that $h(a) =$
sup$\{n : a \in P^nM\}$ (this means that $h(a)$ is either a nonnegative integer or $\infty$). One can see that either $(0 :_R a) = P^m$ or $(0 :_R a) = 0$, since if $(0 :_R a) = P^{m+1}$, $p^ma \neq 0$ and $pp^ma = 0$, and so we can take $a$ such that $(0 :_R a) = P$ or $0$. We consider three cases.
\newline
{\bf Case one:} If $h(a) = n$ and $(0 :_R a) = P$, then $a = p^nb$ for some positive integer $n$ and $b \in M$. Therefore $Rb \cong R/P^{n+1}$ is a pseudo-absorbing primary multiplication $R$-module. Hence by a similar argument like that in \cite[Theorem 2.12 Case 1]{Atani}, the submodule $Rb$ of $M$ is pure. But $Rb$ is a submodule of bounded order of $M$, hence \cite[Theorem 5]{Kaplansky} implies that $Rb$ is a direct summand of $M$. Thus $M = Rb \cong R/P^{n+1}$.
\newline
{\bf Case two:} If $h(a) = n$ and $(0 :_R a) = 0$, then $a = p^nb$ for some positive integer $n$ and $b \in M$. Therefore $(0 :_R b) = 0$, and hence $Rb \cong R$. By a similar argument like that in Case 1, the submodule $Rb$ of $M$ is pure. Hence by a similar argument like that in \cite[Theorem 2.12 Case 2]{Atani}, there is a positive integer $t$ such that $R \cong Rb = P^tM = M$.
\newline
{\bf Case three:} Assume that $h(a) = \infty$. If $(0 :_R a) = P$, then as in \cite[Theorem 2.12 Case 4]{Atani} we have $M \cong E(R/P)$, a contradiction, by Proposition \ref{2.5.5}. If $(0 :_R a) = 0$, then as in \cite[Theorem 2.12 Case 3]{Atani} we have $M \cong Q(R)$, a contradiction, by Proposition \ref{2.5.5}.
}
\end{proof}

We close this section with the following result.

\begin{cor} 
Suppose that $R$ is a  discrete valuation domain with  maximal ideal $P = Rp$, and $M$ is a pseudo-absorbing primary multiplication $R$-module. Then $M$ is a direct sum of copies of $R/P^n$ $(n \geq 1)$. Moreover, every pseudo-absorbing primary multiplication $R$-module apart from $R$ is pure-injective.
\end{cor}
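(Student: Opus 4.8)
The plan is to deduce the corollary from the classification of indecomposables in Theorem~\ref{prev} and the closure property in Proposition~\ref{242}(2); concretely, I would show that every pseudo-absorbing primary multiplication $R$-module is isomorphic to $R$ or to a single $R/P^n$. I would run the case analysis from the proof of Theorem~\ref{prev} without assuming $M$ indecomposable: for $0\neq a\in M$ one may arrange $(0:_R a)\in\{P,0\}$ and set $h(a)=\sup\{n:a\in P^nM\}$. The infinite-height subcase is excluded because \cite[Theorem~2.12, Cases 3 and 4]{Atani} then exhibits a copy of $E(R/P)$ or of $Q(R)$ inside $M$; both are injective $R$-modules, so such a copy is a direct summand, contradicting Proposition~\ref{242}(2) together with Proposition~\ref{2.5.5}. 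If $M$ has a torsion-free element then $M$ is torsion-free of rank one --- a torsion-free module of rank $\geq2$ contains $R$-independent $a,b$ with $\bigl(R(a+b):_R M\bigr)=0$, so $R(a+b)$ is pseudo-absorbing primary yet differs from $\bigl(R(a+b):_R M\bigr)M=0$ --- hence $M\cong R$ or $M\cong Q(R)$, and $Q(R)$ is excluded by Proposition~\ref{2.5.5}, giving $M\cong R$.

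The remaining, and main, case is that of a nonzero torsion module $M$, where the claim becomes $M\cong R/P^n$ for a single $n$. As in Case~1 of the proof of Theorem~\ref{prev}, one produces $b\in M$ with $Rb\cong R/P^{n+1}$, argues (by the computation of \cite[Theorem~2.12, Case~1]{Atani}, which only uses that $Rb$ is pseudo-absorbing primary multiplication and the height of $b$) that $Rb$ is a pure submodule, and concludes from \cite[Theorem~5]{Kaplansky} that this pure bounded-order submodule is a direct summand, $M=Rb\oplus M'$. By Proposition~\ref{242}(2), $M'$ is again a torsion pseudo-absorbing primary multiplication module. I would then set $N=0\oplus M'\subseteq M$: a short computation gives $\bigl(N:_R M\bigr)=P^{n+1}$, which is a $2$-absorbing primary ideal of $R$, so $N$ is pseudo-absorbing primary; therefore $N=\bigl(N:_R M\bigr)M=P^{n+1}M=0\oplus P^{n+1}M'$, i.e.\ $M'=P^{n+1}M'$. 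Over a discrete valuation domain this forces $M'$ to be divisible, and a divisible pseudo-absorbing primary multiplication module over an integral domain is simple (as established in the proof of Proposition~\ref{2.5.5}); since the only simple $R$-module, $R/P$, is not divisible, we get $M'=0$ and $M\cong R/P^{n+1}$.

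For the ``moreover'' part, $R/P^n$ is a module of bounded order over a discrete valuation domain, hence pure-injective; thus every pseudo-absorbing primary multiplication module apart from $R$ --- that is, each $R/P^n$ --- is pure-injective. The exclusion of $R$ is unavoidable: over a non-complete discrete valuation domain the embedding of $R$ into its $P$-adic completion is pure and non-split, so $R$ is not pure-injective.

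I expect the main obstacle to be re-establishing, in the possibly decomposable setting, the purity of $Rb\cong R/P^{n+1}$ and of the constructions in Cases 3--4 of \cite[Theorem~2.12]{Atani}: those arguments were written assuming $M$ indecomposable, so one must check they rely only on the submodule in question being pseudo-absorbing primary multiplication together with the relevant height data, and that the initial reductions still deliver a suitable $b$. The remaining ingredients --- the diagonal submodule for the rank $\geq2$ case, the identity $\bigl(N:_R M\bigr)=P^{n+1}$, and the divisibility/simplicity dichotomy from Proposition~\ref{2.5.5} --- are routine formal verifications.
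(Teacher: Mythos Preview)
Your plan diverges from the paper's and targets a stronger conclusion: you argue that $M$ is itself $R$ or a single $R/P^n$, whereas the paper simply picks an indecomposable direct summand of $M$, applies Proposition~\ref{242}(2) and Theorem~\ref{prev} to it, and cites \cite[Proposition~1.3]{Ebrahimipure} for pure-injectivity --- without ever justifying that $M$ admits such a decomposition. Your stronger claim is correct, but the route through purity of $Rb$ and \cite[Theorem~5]{Kaplansky} is more delicate than necessary, and the hurdle you flag is real: height zero alone does not force $Rb$ to be pure (for instance $b=(1,p)$ in $R/P\oplus R/P^3$ has height zero but $Rb$ is not pure), so the argument from \cite{Atani} cannot be quoted verbatim and must genuinely invoke the pseudo-absorbing primary multiplication hypothesis on $M$. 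There is also a small omission in your case split: rank-one modules with nonzero torsion are not explicitly excluded, though $(T(M):_RM)=0$ disposes of them at once.

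A much shorter argument avoids both issues. In a discrete valuation domain every proper ideal has prime radical and is therefore $2$-absorbing primary by \cite[Theorem~2.8]{Badawi2}; thus \emph{every} proper submodule of $M$ is pseudo-absorbing primary, so $M$ is in fact a multiplication module. If $M\neq PM$, pick $m\notin PM$: then $Rm=(Rm:_RM)M$ with $(Rm:_RM)\nsubseteq P$, whence $Rm=M$ and $M$ is cyclic. If $M=PM$ then $M$ is divisible, and the first paragraph of the proof of Proposition~\ref{2.5.5} shows that a nonzero divisible multiplication module over a domain is simple; since $R/P$ is not divisible, $M=0$. Hence $M\cong R$ or $M\cong R/P^n$, and pure-injectivity of the latter follows exactly as you indicate.
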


\begin{proof}
{ Suppose that $R$ is a  discrete valuation domain with  maximal ideal $P = Rp$, and $M$ is a pseudo-absorbing primary multiplication $R$-module. Suppose also that $N$ is the indecomposable summand of $M$. It follows from Proposition \ref{242} (2) that $N$ is an indecomposable pseudo-absorbing primary multiplication module, and so $N \cong R/P^n$ $(n \geq 1)$, by Theorem \ref{prev}. For the``Moreover" statement, since $M$ is a direct sum of copies of $R/P^n$, every pseudo-absorbing primary multiplication $R$-module apart from $R$ is pure-injective, by Theorem \ref{prev} and \cite[Proposition 1.3]{Ebrahimipure}.
}
\end{proof}

\begin{center}{\section{The separated pseudo-absorbing primary multiplication modules over a pullback ring
}}\end{center}

In this section, the indecomposable separated pseudo-absorbing primary multiplication modules over a pullback of two local Dedekind domains are described and some of their basic properties are given. Recall the construction: if $R_1$ and $R_2$ are two  discrete valuation domains with maximal ideals $P_1 = Rp_1$ and $P_2 = Rp_2$ respectively, then 
\begin{equation}\label{eq:1}
R = (R_1 \xrightarrow{\mathcal{V}_1(x)} \overline{R} \xleftarrow{\mathcal{V}_2(x)} R_2)
\end{equation}
 is the pullback ring of $R_1$ and $R_2$ with a unique maximal ideal $P = P_1\oplus P_2$. Note that  $\overline{R} \cong R/P \cong R_1/P_1\cong R_2/P_2$  is a field. This means that $R$ is a Noetherian local ring with unique maximal ideal $P$. So it is easy to see that $P_1$ and $P_2$ are the other prime ideals of $R$. For more details on the pullback-theoretic of two discrete valuation domains, see \cite{Ebrahimipure}.
 
 In the following results of this paper we assume that $R$ is a pullback ring as in \eqref{eq:1} and $S = (S_1 = S/P_2S \xrightarrow{f_1(x)} \overline{S} = S/PS \xleftarrow{f_2(x)} S_2 = S/P_1S)$ is a separated $R$-module.

\begin{prop} \label{3.1.1}
Let the situation be as in \eqref{eq:1}. Then the following statements hold:

$(1)$ The ideals $\{0\}$, $0\oplus P_2^m$, $P_1^n\oplus 0$ and $P_1^n \oplus P_2^m$ are $2$-absorbing primary ideals of $R$ for every $m,n\geq 1$.

$(2)$ Suppose that $T = (T_1, T_2)$ is a pseudo-absorbing primary submodule of the non-zero separated $R$-module $S = (S_1 = S/P_2S \xrightarrow{f_1(x)} \overline{S} = S/PS \xleftarrow{f_2(x)} S_2 = S/P_1S)$. Then $T_1$ and $T_2$ are pseudo-absorbing primary submodules of $S_1$ and $S_2$, respectively.

$(3)$ Suppose that $T = (T_1, T_2)$ is a pseudo-absorbing primary submodule of the non-zero separated $R$-module $S = (S_1 = S/P_2S \xrightarrow{f_1(x)} \overline{S} = S/PS \xleftarrow{f_2(x)} S_2 = S/P_1S)$. Then $(T :_R S) \in \{0, 0\oplus P_2^m, P_1^n\oplus 0, P_1^n \oplus P_2^m\}$ for some $m,n\geq 1$.

$(4)$ Suppose that $T = (T_1, T_2)$ is a pseudo-absorbing primary submodule of the non-zero separated $R$-module $S = (S_1 = S/P_2S \xrightarrow{f_1(x)} \overline{S} = S/PS \xleftarrow{f_2(x)} S_2 = S/P_1S)$ such that  $S_1$ and $S_2$ are pseudo-absorbing primary multiplication $R_1$-module and $R_2$-module, respectively. Then $(T :_R S) = 0$ implies $T = 0$ and $S_i \neq 0$, for each $i = 1, 2$. 

$(5)$ Let $S = (S_1 = S/P_2S \xrightarrow{f_1(x)} \overline{S} = S/PS \xleftarrow{f_2(x)} S_2 = S/P_1S)$ be a non-zero separated pseudo-absorbing primary multiplication $R$-module. Then $S$ contains at least one pseudo-absorbing primary submodule.
\end{prop}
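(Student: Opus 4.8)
The plan is to produce a pseudo-absorbing primary submodule of $S$ explicitly, by taking $N = \{0\}$. Since $S$ is non-zero, $N$ is a proper submodule of $S$, so by Definition \ref{*} it is enough to verify that the annihilator $(0 :_R S)$ of $S$ is a $2$-absorbing primary ideal of $R$.

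The first step is to compute $(0 :_R S)$ through the separated structure. As $S$ is separated, $P_1S \cap P_2S = 0$, so the canonical map $S \to S_1 \oplus S_2 = (S/P_2S) \oplus (S/P_1S)$, $x \mapsto (x + P_2S,\ x + P_1S)$, is injective with both coordinate projections surjective, and $(r,s) \in R$ acts on $S_1 = S/P_2S$ as multiplication by $r$ and on $S_2 = S/P_1S$ as multiplication by $s$. Hence $(0 :_R S) = \{(r,s) \in R : rS_1 = 0 \text{ and } sS_2 = 0\} = (A \oplus B) \cap R$, where $A := (0 :_{R_1} S_1)$, $B := (0 :_{R_2} S_2)$, and $A \oplus B := \{(r,s) : r \in A,\ s \in B\}$.

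The second step uses that $R_1$ and $R_2$ are discrete valuation domains, so $A$ is one of $0$, $R_1$, or $P_1^n$ (some $n \geq 1$), and similarly $B$ is one of $0$, $R_2$, or $P_2^m$; moreover $A = R_1$ forces $S_1 = 0$ and $B = R_2$ forces $S_2 = 0$, while $S \neq 0$ excludes $S_1 = S_2 = 0$. I would then run through the finitely many admissible pairs $(A, B)$, in each case intersecting $A \oplus B$ with $R$, i.e. imposing the compatibility relation $\mathcal{V}_1(r) = \mathcal{V}_2(s)$ defining $R$ inside $R_1 \oplus R_2$. For instance, when $A = R_1$ and $B = P_2^m$ this relation pulls the first coordinate into $P_1$, so $(0 :_R S) = P_1 \oplus P_2^m$; performing the check in all cases shows that $(0 :_R S) \in \{0,\ 0 \oplus P_2^m,\ P_1^n \oplus 0,\ P_1^n \oplus P_2^m\}$ for suitable $m, n \geq 1$. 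By part $(1)$ every ideal in this list is $2$-absorbing primary, so $N = \{0\}$ is a pseudo-absorbing primary submodule of $S$, as required.

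I expect the only point requiring care to be the bookkeeping in this last step: one must apply the compatibility relation $\mathcal{V}_1(r) = \mathcal{V}_2(s)$ correctly while running through the cases for $(A, B)$, so that $(0 :_R S)$ is seen always to lie in the list of part $(1)$ and never to equal $R$ or one of the ``diagonal'' ideals $Rb$ with $b$ a non-zero-divisor. Once the identification $(0 :_R S) = (A \oplus B) \cap R$ is established, this is a routine finite verification against the classification of the ideals of $R$ recalled in the introduction, so I do not anticipate a genuine obstacle. I would also note that only the separatedness of $S$ and $S \neq 0$ are needed here; the pseudo-absorbing primary multiplication hypothesis plays no role in this particular statement.
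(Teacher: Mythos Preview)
Your argument for part~(5) is correct and is genuinely different from the paper's. You exhibit the zero submodule as a pseudo-absorbing primary submodule by computing the annihilator $(0:_R S)$ directly: using separatedness to embed $S$ into $S_1\oplus S_2$, you identify $(0:_R S)=(A\oplus B)\cap R$ with $A=(0:_{R_1}S_1)$ and $B=(0:_{R_2}S_2)$, and then a finite case check against the ideal structure of $R$ forces $(0:_R S)$ onto the list in part~(1). The paper instead argues indirectly: it first invokes the forward direction of Theorem~\ref{3.2.3} to conclude that $S_1$ is a pseudo-absorbing primary multiplication $R_1$-module, then shows $S_1$ must have a pseudo-absorbing primary submodule (otherwise $S_1\cong E(R_1/P_1)$, contradicting Proposition~\ref{2.5.5}), and finally lifts this submodule to one of $S$ via Propositions~\ref{242} and~\ref{2.3}.

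Your route buys several things. It avoids the forward reference to Theorem~\ref{3.2.3}, it does not rely on the structure of $E(R_1/P_1)$, and---as you correctly observe---it shows that the pseudo-absorbing primary multiplication hypothesis on $S$ is unnecessary for part~(5): separatedness and $S\neq 0$ already suffice. The paper's argument, by contrast, genuinely uses that hypothesis to rule out $S_1\cong E(R_1/P_1)$. Your case analysis is sound; in particular the ``mixed'' cases such as $A=R_1$, $B=P_2^m$ are handled correctly by the compatibility condition, which forces the first coordinate into $P_1$ and yields $(0:_R S)=P_1\oplus P_2^m$, still on the list.
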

\begin{proof}
{$(1)$ It follows from \cite[Proposition 3.1]{Dolati} that $\{0\}$ is a $2$-absorbing primary ideal of $R$. Since $\sqrt{0\oplus P_2^m} = 0\oplus P_2$, $\sqrt{P_1^n\oplus 0} = P_1\oplus 0$ and $\sqrt{P_1^n\oplus P_2^m} = P_1\oplus P_2$ are prime ideals of $R$, we conclude by \cite[Theorem 2.8]{Badawi2} that $0\oplus P_2^m$, $P_1^n\oplus 0$ and $P_1^n \oplus P_2^m$ are $2$-absorbing primary ideals of $R$ for every $m,n\geq 1$.

$(2)$ Suppose that $T = (T_1, T_2)$ is a pseudo-absorbing primary submodule of the non-zero separated $R$-module $S = (S_1 = S/P_2S \xrightarrow{f_1(x)} \overline{S} = S/PS \xleftarrow{f_2(x)} S_2 = S/P_1S)$. Suppose also that $a,b,c \in R_1$ and $abc \in (T_1 :_{R_1} S_1)$. Without loss of generality we may assume that $a,b$ and $c$ are nonunit elements. Then $a,b,c\in P_1$, and so $\mathcal{V}_1(abc) = 0 = \mathcal{V}_2(0)$ implies $(abc, 0)\in R$. We show that $(abc, 0)\in (T :_R S)$. Suppose that $(s_1, s_2) \in S$. Then $(abcs_1, 0)\in T$, since $abcs_1 \in T_1\cap P_1S$, $0 \in T_2\cap P_2S$ and $f_1(abcs_1) = 0 = f_2(0)$. Hence $(abcs_1, 0) = (a, 0)(b, 0)(c, 0)(s_1, s_2)\in T$. Thus $(abc, 0)\in (T :_R S)$. But $(T :_R S)$ is a $2$-absorbing primary ideal of $R$, hence either $(a, 0)(b, 0)\in (T :_R S)$ or $(a, 0)(c, 0)\in \sqrt{(T :_R S)}$ or $(b, 0)(c, 0)\in \sqrt{(T :_R S)}$. Therefore either $ab\in (T_1 :_{R_1} S_1)$ or $ac\in \sqrt{(T_1 :_{R_1} S_1)}$ or $bc\in \sqrt{(T_1 :_{R_1} S_1)}$. Thus $T_1$ is a pseudo-absorbing primary submodule of $S_1$. A similar argument shows that $T_2$ is a pseudo-absorbing primary submodule of $S_2$.

$(3)$ Suppose that $T = (T_1, T_2)$ is a pseudo-absorbing primary submodule of the non-zero separated $R$-module $S = (S_1 = S/P_2S \xrightarrow{f_1(x)} \overline{S} = S/PS \xleftarrow{f_2(x)} S_2 = S/P_1S)$. It follows from \cite[Theorem 2.11]{Badawi2} that  $2$-absorbing primary ideals of $R_1$ are of the form $\{0\}$ and $P_1^n$ for some positive integer $n$ and  $2$-absorbing primary ideals of $R_2$ are of the form $\{0\}$ and $P_2^m$ for some positive integer $m$. Hence $(T :_R S) = (T_1 :_{R_1} S_1)\times (T_2 :_{R_2} S_2)$ gives $(T :_R S) = 0$, $0\oplus P_2^m$, $P_1^n\oplus 0$ or $P_1^n \oplus P_2^m$ for some $m,n\geq 1$.

$(4)$ Suppose that $T = (T_1, T_2)$ is a pseudo-absorbing primary submodule of the non-zero separated $R$-module $S = (S_1 = S/P_2S \xrightarrow{f_1(x)} \overline{S} = S/PS \xleftarrow{f_2(x)} S_2 = S/P_1S)$ such that  $S_1$ and $S_2$ are pseudo-absorbing primary multiplication $R_1$-module and $R_2$-module, respectively. Since $T_1$ and $T_2$ are pseudo-absorbing primary submodules of $S_1$ and $S_2$, respectively by part $(2)$, we have $T_1 = (T_1 :_{R_1} S_1)S_1$ and $T_2 = (T_2 :_{R_2} S_2)S_2$, and so it is enough to show that $(T_1 :_{R_1} S_1)$ and $(T_2 :_{R_2} S_2)$ are zero. Suppose to the contrary, $(T_1 :_{R_1} S_1)\neq 0$ and since $S$ is nonzero, we may assume that $S_1\neq 0$. Hence there exists a nonzero element $r_1$ of $(T_1 :_{R_1} S_1)\subseteq P_1$, and so $\mathcal{V}_1(r_1) = 0 = \mathcal{V}_2(0)$ implies $(r_1, 0) \in R$. Therefore $(r_1 , 0)(s_1, s_2) = (r_1s_1, 0) \in T$ for every $(s_1, s_2) \in S$, because $r_1s_1\in T_1\cap P_1S$, $0 \in T_2\cap P_2S$, $f_1(r_1s_1) = 0 = f_2(0)$ and $T$ is separated. Thus  $0 \neq (r_1, 0)S \subseteq T$, a contradiction. We show that $S_2\neq 0$, for otherwise it would follow that $S = (P_1 \oplus 0)S$, and so $(0 \oplus P_2)S = (0 \oplus P_2)(P_1 \oplus 0)S = 0$. This means that $0\neq (0 \oplus P_2)\subseteq (0 :_R S)\subseteq (T :_R S)$, a contradiction. Therefore, by a similar argument as above we can see that $(T_2 :_{R_2} S_2) = 0$. Thus $T_1 = T_2 = 0$ and $S_i \neq 0$, for each $i = 1, 2$.

$(5)$ Let $S = (S_1 = S/P_2S \xrightarrow{f_1(x)} \overline{S} = S/PS \xleftarrow{f_2(x)} S_2 = S/P_1S)$ be a non-zero separated pseudo-absorbing primary multiplication $R$-module. It follows from Theorem \ref{3.2.3} that $S_1$ is a pseudo-absorbing primary multiplication $R_1$-module. We show that there exists a pseudo-absorbing primary submodule of $S_1$. Suppose to the contrary, there is no pseudo-absorbing primary submodule of $S_1$. Since every prime submodule is a pseudo-absorbing primary submodule of $S_1$, it can be seen by \cite[Lemma 1.3, Proposition 1.4]{MacCasland} that $S_1$ is a torsion divisible $R_1$-module with $P_1S_1 = S_1$ and $S_1$ is not finitely generated. By a similar argument as in \cite[Proposition 2.7 Case 2]{Ebrahimi}, we get $S_1 \cong E(R_1/P_1)$, a contradiction, by Proposition \ref{2.5.5}. Therefore there exists a pseudo-absorbing submodule $T_1$ of $S_1$. Now, let $g$ be the projection map of $R$ onto $R_1$ and apply Proposotion \ref{242} to this particular choice and obtain that $T_1$ is a pseudo-absorbing primary $R$-submodule of $S_1 = S/P_2S$. Thus there exists at least one pseudo-absorbing primary submodule of $S$, by Proposition \ref{2.3} $(1)$.
}
\end{proof}

\begin{thm} \label{3.2.3}
Let $S = (S_1 = S/P_2S \xrightarrow{f_1(x)} \overline{S} = S/PS \xleftarrow{f_2(x)} S_2 = S/P_1S)$ be a non-zero separated $R$-module. Then $S$ is a pseudo-absorbing primary multiplication $R$-module if and only if $S_1$ and $S_2$ are pseudo-absorbing primary multiplication $R_1$-module and $R_2$-module, respectively.
\end{thm}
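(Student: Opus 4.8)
The plan is to prove the two implications of the biconditional separately, leaning on Proposition \ref{3.1.1} throughout.

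\emph{($\Rightarrow$)} Assume $S$ is a pseudo-absorbing primary multiplication $R$-module; I show $S_1$ is one over $R_1$, the case of $S_2$ being symmetric. Let $g\colon R\to R_1$ be the canonical projection and $\pi\colon S\to S_1=S/P_2S$ the quotient $R$-epimorphism. Given a pseudo-absorbing primary $R_1$-submodule $T_1$ of $S_1$, set $T=\pi^{-1}(T_1)$, a proper $R$-submodule of $S$ containing $P_2S$. From $R$-linearity and surjectivity of $\pi$ one gets $(T:_RS)=g^{-1}\bigl((T_1:_{R_1}S_1)\bigr)$. The preimage of a $2$-absorbing primary ideal under a ring homomorphism is again $2$-absorbing primary — if $abc\in g^{-1}(Q)$ then one of $g(a)g(b)\in Q$, $g(a)g(c)\in\sqrt{Q}$, $g(b)g(c)\in\sqrt{Q}$ holds, and these pull back to $ab\in g^{-1}(Q)$, $ac\in\sqrt{g^{-1}(Q)}$, $bc\in\sqrt{g^{-1}(Q)}$ respectively — so $T$ is pseudo-absorbing primary in $S$. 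Hence $T=(T:_RS)S$, and applying $\pi$ gives $T_1=g\bigl((T:_RS)\bigr)S_1$, an ideal of $R_1$ times $S_1$, so $S_1$ is a pseudo-absorbing primary multiplication $R_1$-module.

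\emph{($\Leftarrow$)} Assume $S_1,S_2$ are pseudo-absorbing primary multiplication over $R_1,R_2$. Let $T=(T_1,T_2)$ be a pseudo-absorbing primary submodule of $S$, with $T_i$ the image of $T$ in $S_i$; the candidate ideal is $Q:=(T:_RS)$, and $QS\subseteq T$ is automatic. By Proposition \ref{3.1.1}(2) each $T_i$ is pseudo-absorbing primary in $S_i$, so $T_i=(T_i:_{R_i}S_i)S_i$ by hypothesis; by Proposition \ref{3.1.1}(3), $Q\in\{0,\ 0\oplus P_2^m,\ P_1^n\oplus 0,\ P_1^n\oplus P_2^m\}$, and the description of $(T:_RS)$ in terms of the $(T_i:_{R_i}S_i)$ from the proof of Proposition \ref{3.1.1}(3) identifies $(T_i:_{R_i}S_i)$ with the $i$-th component $Q_i$ of $Q$. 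If $Q=0$, then Proposition \ref{3.1.1}(4) forces $T_1=T_2=0$, whence $T=0=QS$. If $Q\neq 0$, write $Q=(Q_1\oplus 0)+(0\oplus Q_2)$; then $QS=S\cap(Q_1S_1\oplus Q_2S_2)$, where $\subseteq$ is clear since $(Q_1\oplus 0)S$ has vanishing $S_2$-component and dually, while $\supseteq$ is obtained by lifting an element of $Q_1S_1$ to $S$ and multiplying by a suitable $(r,0)\in Q$ with $r\in Q_1\subseteq P_1$ (and symmetrically on the other factor). Since $S$ is separated, $T\subseteq S\cap(T_1\oplus T_2)=S\cap(Q_1S_1\oplus Q_2S_2)=QS$, so $T=QS$.

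The step I expect to be the main obstacle is the module identity $QS=S\cap(Q_1S_1\oplus Q_2S_2)$ in the second implication: this is precisely where the separated structure of $S$ — the amalgamation of $S_1$ and $S_2$ over $\overline{S}=S/PS$, i.e.\ $P_1S\cap P_2S=0$ — must be used, rather than being a formal consequence of $S_1,S_2$ being multiplication-type. The lifting argument depends on the explicit form of $Q$ supplied by Proposition \ref{3.1.1}(3), in particular that its components are $0$ or powers of $P_1,P_2$, so that the relevant elements $(r,0)$, $(0,r')$ with $r\in Q_1$, $r'\in Q_2$ genuinely lie in $R$ and in $Q$; without this one cannot produce the required preimages inside $QS$.
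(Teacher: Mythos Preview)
Your proof is correct in substance and follows essentially the same route as the paper. For ($\Rightarrow$) the paper simply cites Proposition~\ref{242}(1),(2) (quotients and direct summands of pseudo-absorbing primary multiplication modules inherit the property), which unwinds to exactly your pull-back argument via $g^{-1}$; for ($\Leftarrow$) both proofs invoke Proposition~\ref{3.1.1}(2)--(4), split on the shape of $(T:_RS)$, and finish with the same lifting computation to show $T\subseteq(T:_RS)S$. Your packaging of the non-zero cases into the single identity $QS=S\cap(Q_1S_1\oplus Q_2S_2)$ is a mild streamlining, but the content is the paper's Case~two/three verbatim.

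One imprecision worth fixing, since you flagged this as the crux: in the $\supseteq$ direction of $QS=S\cap(Q_1S_1\oplus Q_2S_2)$, ``lifting an element of $Q_1S_1$ to $S$ and multiplying by $(r,0)$'' does not recover that element. If you lift $x_1\in Q_1S_1$ to $(x_1,y)\in S$ and multiply by $(r,0)$ you get $(rx_1,0)$, not $(x_1,0)$. The correct move --- which the paper writes out explicitly --- is to first factor $x_1=p_1^{\,n}s_1$ with $s_1\in S_1$ (using that $Q_1=P_1^n$ is principal), lift $s_1$ to some $(s_1,u_2)\in S$, and then compute $(p_1^n,0)(s_1,u_2)=(x_1,0)\in QS$; symmetrically for the $S_2$-component. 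This is precisely where the explicit form of $Q$ from Proposition~\ref{3.1.1}(3) is used, as you anticipated.
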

\begin{proof}
{ First suppose that $S$ is a separated pseudo-absorbing primary multiplication $R$-module. If $\overline{S} = 0$, then \cite[Lemma 2.7 (i)]{Ebrahimipure} gives $S = S_1\oplus S_2$. Thus $S_1$ and $S_2$ are pseudo-absorbing primary multiplication $R_1$-module and $R_2$-module, respectively, by Proposition \ref{242} $(2)$. If $\overline{S} \neq 0$, again by Proposition \ref{242} $(1)$, we have $S_1 \cong S/(0 \oplus P_2)S$ is a pseudo-absorbing primary multiplication $R/(0 \oplus P_2)\cong R_1$-module, since $(0 \oplus P_2)\subseteq ((0 \oplus P_2)S : S)$. By a similar argument, one may see that $S_2$ is a pseudo-absorbing primary multiplication $R/(P_1 \oplus 0)\cong R_2$-module. Now, assume that $S_1$ and $S_2$ are pseudo-absorbing primary multiplication $R_1$-module and $R_2$-module, respectively. Suppose that $T$ is a pseudo-absorbing primary submodule of $S$. It follows from Proposition \ref{3.1.1} $(3)$ that $(T :_R S) = 0$ or $0\oplus P_2^m$ or $P_1^n\oplus 0$ or $P_1^n \oplus P_2^m$ for some $m,n\geq 1$. We distinguish three cases:
\newline {\bf Case one:} If $(T :_R S) = 0$, then $T = (T :_R S)S = 0$, by Proposition \ref{3.1.1} $(3)$, and so $S$ is a pseudo-absorbing primary multiplication $R$-module.
\newline {\bf Case two:} If $(T :_R S) = P_1^n \oplus P_2^m$ for some $m,n\geq 1$, then \cite[Proposition 4.2 (i)]{Ebrahimisemiprime} gives $(T_1 :_{R_1} S_1) = P_1^n$ and $(T_2 :_{R_2} S_2) = P_2^m$. Now apply Proposition \ref{3.1.1} $(2)$ to obtain $T_1 = (T_1 :_{R_1} S_1)S_1 = P_1^nS_1$ and $T_2 = (T_2 :_{R_2} S_2)S_2 = P_2^mS_2$. To complete the proof, it is only necessary to observe that $T = (P_1^n\oplus P_2^m)S$. Clearly $ (P_1^n\oplus P_2^m)S \subseteq T$. Suppose that $t = (t_1, t_2)\in T$. Then there are $s_1\in S_1$ and $s_2 \in S_2$ such that $t_1 = p_1^ns_1$ and $t_2 = p_2^ms_2$. But $s_1 \in S_1$ and $s_2 \in S_2$, hence there are $u_1\in S_1$ and $u_2 \in S 2$ such that $(s_1, u_2), (u_1, s_2)\in S$. This means that $t = (p_1^ns_1, p_2^ms_2) = (p_1^n, 0)(s_1, u_2) + (0, p_2^m)(u_1, s_2) \in (P_1^n\oplus P^m_2)S$, and thus $T = (P_1^n\oplus P^m_2)S$.
\newline {\bf Case three:} If $(T :_R S) = P_1^n \oplus 0$ for some $n \geq 1$, then \cite[Proposition 4.2 (i)]{Ebrahimisemiprime} gives $(T_1 :_{R_1} S_1) = P_1^n$ and $(T_2 :_{R_2} S_2) = 0$. Now apply Proposition \ref{3.1.1} $(2)$ to obtain $T_1 = (T_1 :_{R_1} S_1)S_1 = P_1^nS_1$, $T_2 = (T_2 :_{R_2} S_2)S_2 = 0$. To complete the proof, it is only necessary to observe that $T = (P_1^n\oplus 0)S$. Clearly $ (P_1^n\oplus 0)S \subseteq T$. Suppose that $t = (t_1, 0)\in T$. Then there are $s_1\in S_1$ and $s_2 \in S_2$ such that $t_1 = p_1^ns_1$ and $(s_1, s_2)\in S$. Hence $t = (p_1^ns_1, 0) = (p_1^n, 0)(s_1, s_2) \in (P_1^n\oplus 0)S$, and thus $T = (P_1^n\oplus 0)S$. By a similar argument as above, we can see that $T = (0\oplus P_2^m)S$ if $(T :_R S) = 0\oplus P_2^m$ for some $m \geq 1$. Therefore $S$ is a pseudo-absorbing primary multiplication $R$-module.

}
\end{proof}

Consequently, in view of Theorems \ref{prev} and \ref{3.2.3}, we state the following corollary.

\begin{cor}\label{3.3.3}
Suppose that $R$ is a pullback ring as described in \eqref{eq:1}. Then the following separated $R$-modules are indecomposable and pseudo-absorbing primary multiplication modules:

$(1)$ $R$;

$(2)$ $S = (S_1 = R_1/P_1^n \xrightarrow{f_1(x)} \overline{R} \xleftarrow{f_2(x)} R_2/P_2^m)$;

$(3)$ $S = (S_1 = R_1 \xrightarrow{f_1(x)} \overline{R} \xleftarrow{f_2(x)} R_2/P_2^m)$;

$(4)$ $S = (S_1 = R_1/P_1^n \xrightarrow{f_1(x)} \overline{R} \xleftarrow{f_2(x)} R_2)$;

 for every $m,n\geq 1$.

\end{cor}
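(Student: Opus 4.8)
The plan is to verify the two asserted properties --- being a pseudo-absorbing primary multiplication module and being indecomposable --- separately, reducing each question to facts already established for the two valuation-domain components $S_1$ and $S_2$. In all four families the amalgamation occurs over the one-dimensional $\overline{R}$-space $\overline{S}\cong\overline{R}$, i.e.\ these are precisely the separated modules whose top $\overline{S}$ has dimension at most $1$, and this one-dimensionality is exactly what will drive the indecomposability argument.

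For the pseudo-absorbing primary multiplication property, item $(1)$ is immediate: $R$ is a cyclic module over itself, hence a multiplication module, and a multiplication module is trivially pseudo-absorbing primary multiplication (each of its submodules, in particular each pseudo-absorbing primary one, has the form $IM$). For items $(2)$, $(3)$ and $(4)$ I would apply Theorem \ref{3.2.3}: each $S$ is a non-zero separated $R$-module whose components, read off from the displayed data, satisfy $S_1\in\{R_1,\,R_1/P_1^n\}$ and $S_2\in\{R_2,\,R_2/P_2^m\}$. By Theorem \ref{prev} each of $R_1$ and $R_1/P_1^n$ is a pseudo-absorbing primary multiplication $R_1$-module and each of $R_2$ and $R_2/P_2^m$ is a pseudo-absorbing primary multiplication $R_2$-module, so Theorem \ref{3.2.3} yields that $S$ is a pseudo-absorbing primary multiplication $R$-module. (Item $(1)$ could be subsumed here as well, since $R$ itself has components $R_1$ and $R_2$.)

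For indecomposability, $R$ is indecomposable because it is a commutative local ring, so $0$ and $1$ are its only idempotents. For $(2)$, $(3)$ and $(4)$ each module is the pullback $S=S_1\times_{\overline{S}}S_2$ in which $S_1$ and $S_2$ are indecomposable with local endomorphism rings ($\mathrm{End}_{R_1}(S_1)\cong S_1\in\{R_1,R_1/P_1^n\}$, and similarly over $R_2$), $f_1,f_2$ are the canonical surjections onto the one-dimensional tops, and $\overline{S}\cong\overline{R}\neq 0$. To conclude I would show that $\mathrm{End}_R(S)$ has no nontrivial idempotents: every $R$-endomorphism $\theta$ of $S$ descends to $R_i$-endomorphisms $\theta_1,\theta_2$ of $S_1=S/P_2S$ and $S_2=S/P_1S$, and, since $S\subseteq S_1\oplus S_2$, $\theta$ is the restriction of $\theta_1\oplus\theta_2$; if $\theta=\theta^2$ then $\theta_1,\theta_2$ are idempotents of local rings and hence each equals $0$ or $1$, while $\theta_1$ and $\theta_2$ induce the \emph{same} endomorphism of the common nonzero top $\overline{S}$, which rules out the two mixed possibilities, so $\theta_1=\theta_2$ and therefore $\theta\in\{0,1\}$. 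Alternatively one may simply quote the corresponding indecomposability statement from the structure theory of separated modules over pullbacks of valuation domains; cf.\ \cite[Proposition 1.3]{Ebrahimipure} and \cite{Levy}.

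The routine part is the identification of the components $S_1,S_2$ together with the two appeals to Theorems \ref{prev} and \ref{3.2.3}. The one step carrying content beyond the earlier results is the indecomposability of the amalgamated modules in $(2)$--$(4)$ --- that gluing two indecomposable separated pieces along a one-dimensional top cannot split the result --- and this is where I expect the main obstacle to lie, though it is either covered by the known module theory over pullbacks of valuation domains or dispatched by the short endomorphism-ring computation above.
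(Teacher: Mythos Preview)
Your proof is correct and follows essentially the same route as the paper: the pseudo-absorbing primary multiplication property is obtained exactly as you do, by combining Theorem~\ref{prev} on the components with Theorem~\ref{3.2.3}, and indecomposability is handled by citation to the structure theory in \cite{Ebrahimipure} (the paper quotes \cite[Lemma 2.8]{Ebrahimipure} rather than \cite[Proposition 1.3]{Ebrahimipure}). Your explicit endomorphism-ring argument for indecomposability in $(2)$--$(4)$ is a nice self-contained addition that the paper simply outsources to the reference.
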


\begin{proof}
{It follows from Theorems \ref{prev} and \ref{3.2.3} that these $R$-modules are pseudo-absorbing primary multiplication modules and from \cite [Lemma 2.8]{Ebrahimipure} that they are indecomposable.
}
\end{proof}

The observation in Corollary \ref{3.3.3} provides a clue to a characterization of non-zero indecomposable separated pseudo-absorbing primary multiplication $R$-modules.

\begin{thm}\label{3.3.4}
Suppose that $R$ is a pullback ring as described in \eqref{eq:1}. Then every non-zero indecomposable separated pseudo-absorbing primary multiplication $R$-module is isomorphic to one of the modules listed in {\rm Corollary \ref{3.3.3}}.
\end{thm}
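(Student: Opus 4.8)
Here is the plan.

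\medskip

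\noindent\textbf{Reduction to the two discrete valuation domains.} Let $S=(S_1\xrightarrow{f_1}\overline S\xleftarrow{f_2}S_2)$ be a non-zero indecomposable separated pseudo-absorbing primary multiplication $R$-module. By Theorem \ref{3.2.3}, $S_1$ and $S_2$ are pseudo-absorbing primary multiplication modules over $R_1$ and $R_2$. By Theorem \ref{prev} and the corollary following it, each of $S_1,S_2$ is a direct sum of indecomposable modules of the listed type, so $S_1\cong\bigoplus_{i\in I}A_i$ with $A_i\in\{R_1\}\cup\{R_1/P_1^{\,n}:n\geq1\}$ and $S_2\cong\bigoplus_{j\in J}B_j$ with $B_j\in\{R_2\}\cup\{R_2/P_2^{\,m}:m\geq1\}$. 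Each of these indecomposables has one-dimensional top over $\overline R$ and $\overline S=S_1/P_1S_1=S_2/P_2S_2$, so $|I|=|J|=\dim_{\overline R}\overline S=:d$. Since $S$ is separated it is the fibred product $S_1\times_{\overline S}S_2=\{(s_1,s_2):f_1(s_1)=f_2(s_2)\}$, and $\operatorname{End}_R(S)$ consists of the pairs $(\varphi_1,\varphi_2)\in\operatorname{End}_{R_1}(S_1)\times\operatorname{End}_{R_2}(S_2)$ inducing a common map on $\overline S$; in particular a nontrivial idempotent of $\operatorname{End}_R(S)$ is the same thing as a compatible pair of nontrivial idempotents on the $S_i$. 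The whole proof comes down to showing $d=1$.

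\medskip

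\noindent\textbf{The cases $d=0$ and $d=1$.} If $d=0$ then $\overline S=0$, so $S=P_1S+P_2S$ and, by separatedness, $S=P_1S\oplus P_2S$; indecomposability forces $S=P_1S$ with $P_2S=0$, whence $0\oplus P_2\subseteq(0:_RS)$ and, by Proposition \ref{242}$(3)$, $S$ is a divisible pseudo-absorbing primary multiplication module over $R/(0\oplus P_2)\cong R_1$. As in the proof of Proposition \ref{2.5.5} such a module is simple, hence has one-dimensional top, contradicting $\overline S=0$. If $d=1$ then $|I|=|J|=1$, so $S_1=A$ and $S_2=B$ are indecomposable, i.e.\ $A\in\{R_1,R_1/P_1^{\,n}\}$ and $B\in\{R_2,R_2/P_2^{\,m}\}$; since any unit-twist of the (one-dimensional) amalgamation is absorbed by an automorphism of $R_1$ or $R_2$ acting on a generator, $S$ is determined up to isomorphism by the pair $(A,B)$, and the four possibilities are exactly items $(1)$--$(4)$ of Corollary \ref{3.3.3} (the pair $(R_1,R_2)$ giving $S\cong R$). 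So it remains to rule out $d\geq2$.

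\medskip

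\noindent\textbf{Main step: $d\geq2$ forces $S$ decomposable.} First note that when $d\geq2$ no $A_i$ can be $R_1$: an $R_1$-summand would give $\dim(S_1/P_1S_1)=1$, and moreover $R_1\oplus R_1$ and $R_1\oplus R_1/P_1^{\,n}$ are not pseudo-absorbing primary multiplication (their ``graph'' submodules are pseudo-absorbing primary but not of the form $IM$); hence $S_1\cong\bigoplus_{i=1}^{d}R_1/P_1^{\,n_i}$ and likewise $S_2\cong\bigoplus_{j=1}^{d}R_2/P_2^{\,m_j}$ are \emph{bounded} torsion modules, so the kernels of $\operatorname{End}_{R_1}(S_1)\to M_d(\overline R)$ and $\operatorname{End}_{R_2}(S_2)\to M_d(\overline R)$ are nilpotent and idempotents lift along them. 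Fix the bases $\{\overline{a_i}\},\{\overline{b_j}\}$ of $\overline S$ coming from the generators of the $A_i$, $B_j$ and let $C\in GL_d(\overline R)$ be the change-of-basis matrix encoding the amalgamation. The automorphisms of $S_1$ (resp.\ $S_2$) induce on $\overline S$ the subgroup of $GL_d(\overline R)$ coming from $\operatorname{Aut}(S_1)$ (resp.\ $\operatorname{Aut}(S_2)$), and the decisive realizability fact is that the elementary operation ``add a multiple of the $i'$-th basis vector to the $i$-th'' is realized by an automorphism of $S_1$ precisely when $\operatorname{length}(A_i)\leq\operatorname{length}(A_{i'})$ (a quotient map $R_1/P_1^{\,n'}\to R_1/P_1^{\,n}$ is onto the top only for $n\leq n'$). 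The plan is to run the ensuing Levy-type reduction: pick $A_1$ of minimal length among the $A_i$, write $\overline{a_1}=\sum_jc_j\overline{b_j}$, choose $j_0$ with $c_{j_0}\neq0$ and $\operatorname{length}(B_{j_0})$ maximal among those $j$ with $c_j\neq0$, use an admissible automorphism of $S_2$ to arrange $\overline{a_1}=\overline{b_{j_0}}$, then iterate on the complementary bases. Carrying this bookkeeping through (always peeling off summands of extremal length so that every move is admissible) produces an internal decomposition $S\cong T\oplus T'$ with $T$ the amalgam of the single pair $(A_1,B_{j_0})$ and $T'$ the amalgam of the remaining summands, contradicting indecomposability. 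Hence $d\leq1$, and with the previous paragraph this proves the theorem; the resulting list is precisely that of Corollary \ref{3.3.3}, which establishes the converse direction inside it as well.

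\medskip

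\noindent\textbf{The main obstacle.} The delicate point is the reduction of $C$ in the third paragraph: over the field $\overline R$ one could diagonalize $C$ freely, but here only those row/column operations compatible with the $\operatorname{Hom}$-groups between the indecomposable summands are available, and the order in which one peels off summands (minimal length on the $S_1$ side, maximal among the supporting indices on the $S_2$ side, and dually) must be organized so that each step is realized by an honest automorphism. This is exactly the ``matrix problem over a division ring'' underlying the Levy--Klingler classifications, here degenerate because all indecomposable blocks are $1\times1$; one may either carry out the elementary reduction by hand or invoke Levy's amalgamation lemmas \cite{Levy2,Levy3}. Two minor sub-points to verify along the way are that the relevant idempotents of $M_d(\overline R)$ lift to $\operatorname{End}_{R_i}(S_i)$ (immediate here since, for $d\geq2$, the $S_i$ are bounded) and that, for $d=1$, every unit-twist of the amalgamation is trivial.
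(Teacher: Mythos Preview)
Your outline is correct and reaches the same conclusion, but the route is genuinely different from the paper's. The paper argues intrinsically: it first invokes Proposition~\ref{3.1.1}(5) to produce a pseudo-absorbing primary submodule $T\subseteq S$, uses the case analysis on $(T:_RS)$ to force $PS\neq S$, then picks an element $s\notin PS$, shows (as in \cite[Theorem~2.9]{Ebrahimipure}) that the cyclic submodules $R_is_i$ are pure, hence direct summands, in $S_i$, and concludes that the pullback $M=(R_1s_1\to\overline R\leftarrow R_2s_2)$ is a summand of $S$, so by indecomposability $S=M$. Your argument is extrinsic: you decompose $S_1,S_2$ into indecomposables over the two DVRs (via Theorem~\ref{prev}), read the amalgamation as a change-of-basis matrix $C\in GL_d(\overline R)$, and run a Levy-style reduction to peel off a rank-one block. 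The paper's approach is shorter and stays within the purity machinery already set up; yours makes explicit the underlying matrix problem and foreshadows the non-separated classification in Section~4, at the cost of more bookkeeping. Two small corrections: the realizability condition for ``add a multiple of $\overline{a_{i'}}$ to $\overline{a_i}$'' is $\operatorname{length}(A_i)\geq\operatorname{length}(A_{i'})$, not $\leq$ (your min/max choices are nevertheless the right ones, so the plan is unaffected); and the ``bounded torsion, hence nilpotent kernel'' remark is both unnecessary for the explicit reduction and false when $d=\infty$, a case the statement does not exclude---your elementary peel-off of a single summand still works for infinite $d$, so simply drop that aside.
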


\begin{proof}
{Suppose that $S = (S_1 = S/P_2S \xrightarrow{f_1(x)} \overline{S} = S/PS \xleftarrow{f_2(x)} S_2 = S/P_1S)$ is a non-zero indecomposable separated pseudo-absorbing primary multiplication $R$-module such that $S\neq R$. It follows from Proposition \ref{3.1.1} $(5)$ that there exists at least one pseudo-absorbing primary submodule of $S$, say $T = (T_1, T_2)$, and so $T_i$ is a pseudo- absorbing submodule $S_i$, by Proposition \ref{3.1.1} $(2)$. If $T_i = S_i$ for some $i\in \{1,2\}$, say $T_1 = S_1$, then $(T_1 :_{R_1} S_1) = R_1$ is a pseudo-absorbing primary ideal of $R_1$, which contradicts \cite[Theorem 2.11]{Badawi2}. First we show that $S\neq PS$. As remarked in Proposition \ref{3.1.1} $(3)$, consider various cases for $(T :_R S)$. 
{\bf Case one:} If $(T :_R S) = 0$, then $S\neq PS$ and \cite [Lemma 2.7 (i)]{Ebrahimipure} implies $S = S_1\oplus S_2$. Since $S$ is indecomposable, we conclude that either $S_1 = 0$ or $S_2 = 0$, and this contradicts Proposition \ref{3.1.1} $(4)$.
{\bf Case two:} If $(T :_R S) = P_1^n \oplus P_2^m$ for some $m,n\geq 1$, then $T = (T :_{R} S)S = (P_1^n \oplus P_2^m)S$, since $S$ is a pseudo-absorbing primary multiplication $R$-module. This means that $T_1 = (T_1 :_{R_1} S_1)S_1 = P_1^nS_1 \subseteq P_1S_1 = PS/P_2S \neq S/P_2S = S_1$, and thus $PS \neq S$. 
{\bf Case three:} If $(T :_R S) = P_1^n \oplus 0$ for some $n \geq 1$, then $T = (T :_{R} S)S = (P_1^n \oplus 0)S$, as $S$ is a pseudo-absorbing primary multiplication $R$-module. This means that $T_1 = (T_1 :_{R_1} S_1)S_1 = P_1^nS_1 \subseteq P_1S_1 = PS/P_2S \neq S/P_2S = S_1$, and thus $PS \neq S$. Similarly, if $(T :_R S) = 0 \oplus P_2^m$ for some $m \geq 1$, we get $PS \neq S$. Assume that $s \in S_1 \cup S_2$ such that $s\notin PS$ and 
\begin{center}
$o(s) =$
$\begin{cases}
{\rm least}$ $m$ ${\rm such}$ ${\rm that}$ $ P^ms$ ${\rm =}$ $0 & \text{if  there is such $m$},\\
\infty & \text{if there is no such $m$}.
\end{cases}$
\end{center}
We can assume that $s\in S_2$, and hence there exists $s_1\in S_1$ such that $(s_1, s =: s_2) \in S$, $f_1(s_1) = f_1(s_2)$ and $o(s_1) = n$ is minimal. By a similar rgument as in \cite[Theorem 2.9]{Ebrahimipure}, we can see that  $R_is_i$ is pure in $S_i$ for $i = 1,2$. Thus, $R_1s_1 \cong R_1/P_1^n$ and $R_2s_2 \cong R_2/P_2^m$ which are direct summands in $S_1$ and $S_2$, respectively. If $o(s_1) = \infty$ and $o(s_2) = \infty$, again by \cite[Theorem 2.9]{Ebrahimipure}, $R_1s_1 \cong R_1$ and $R_2s_2 \cong R_2$ which are pure of $S_1$ and $S_2$, respectively. Now suppose that $\overline{M}$ is the $\overline{R}$-subspace of $\overline{S}$ which is generated by $\overline{s} = f_1(s_1) = f_1(s_2)$. Then $\overline{M} \cong \overline{R}$. So that, on taking $M = (R_1s_1 = M_1 \rightarrow \overline{M} \leftarrow M_2 = R_2s_2)$, we deduce that $M$ is a pseudo-absorbing primary multiplication $R$-submodule of $S$, by Corollary \ref{3.3.3} which is a direct summand of $S$, and since $S$ is indecomposable, we get $S = M$. Thus $S$ is one of the modules listed $(2)$--$(4)$ in the Corollary \ref{3.3.3} and this completes the proof.
}
\end{proof}

In view of Theorem \ref{3.3.4}, we have the following result.

\begin{cor}\label{3.5.5}
Suppose that $R$ is a pullback ring as described in \eqref{eq:1}. Then every non-zero indecomposable separated pseudo-absorbing primary multiplication $R$-module non isomorphic to $R$ is pure-injective. Furthermore, every non-zero indecomposable separated pseudo-absorbing primary multiplication $R$-module non isomorphic to $R$ has a finite-dimensional top.
\end{cor}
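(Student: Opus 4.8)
The plan is to read Corollary \ref{3.5.5} off from the classification already established in Theorem \ref{3.3.4}. First I would apply Theorem \ref{3.3.4}: if $S$ is a non-zero indecomposable separated pseudo-absorbing primary multiplication $R$-module with $S\not\cong R$, then $S$ is isomorphic to one of the three amalgamated modules in Corollary \ref{3.3.3}$(2)$--$(4)$, namely to $(R_1/P_1^n \xrightarrow{f_1(x)} \overline{R} \xleftarrow{f_2(x)} R_2/P_2^m)$, to $(R_1 \xrightarrow{f_1(x)} \overline{R} \xleftarrow{f_2(x)} R_2/P_2^m)$, or to $(R_1/P_1^n \xrightarrow{f_1(x)} \overline{R} \xleftarrow{f_2(x)} R_2)$ for suitable $m,n\geq 1$. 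So it is enough to verify the two assertions for these three families.

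For pure-injectivity I would invoke \cite[Proposition 1.3]{Ebrahimipure}, which states precisely that the amalgamated modules over a pullback of two discrete valuation domains displayed above are pure-injective $R$-modules; this is the same tool already used in Theorem \ref{prev} and in the corollary following it to see that $R/P^n$ is pure-injective. For the first family one may instead argue directly: $(R_1/P_1^n \to \overline{R} \leftarrow R_2/P_2^m)$ has finite length over $R$ (its radical layers contribute total length $n+m-1$), and every module of finite length is pure-injective. The substantive case is thus that of the two families containing an infinite piece $R_1$ or $R_2$, for which pure-injectivity rests on the cited result.

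For the finite-dimensional top, recall that for any separated module presented as $S=(S_1=S/P_2S \xrightarrow{f_1(x)} \overline{S}=S/PS \xleftarrow{f_2(x)} S_2=S/P_1S)$ the top $S/\mathrm{Rad}(R)S=S/PS$ is, by the very meaning of this presentation, the middle term $\overline{S}$. In each of the modules of Corollary \ref{3.3.3}$(2)$--$(4)$ that middle term equals $\overline{R}$, so $S/\mathrm{Rad}(R)S\cong\overline{R}$ is one-dimensional over the residue field $\overline{R}=R/\mathrm{Rad}(R)$; in particular it is finite-dimensional, which completes the argument.

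The only step needing care is the reduction itself: one must apply Theorem \ref{3.3.4} correctly, so that discarding $R$ genuinely leaves only families $(2)$--$(4)$ (in which $\overline{S}\neq 0$), after which both conclusions are immediate. Absent \cite[Proposition 1.3]{Ebrahimipure}, the real obstacle would be to prove pure-injectivity of $(R_1\to\overline{R}\leftarrow R_2/P_2^m)$ and $(R_1/P_1^n\to\overline{R}\leftarrow R_2)$ directly, which would require the structure and compactness theory of modules over pullback rings.
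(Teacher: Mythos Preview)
Your proposal is correct. For pure-injectivity, your approach coincides with the paper's: both apply Theorem~\ref{3.3.4} to reduce to the explicit list in Corollary~\ref{3.3.3}(2)--(4) and then cite a result from \cite{Ebrahimipure} for the pure-injectivity of those modules. One small point: the paper invokes \cite[Theorem~2.9]{Ebrahimipure} here rather than \cite[Proposition~1.3]{Ebrahimipure}; in the paper the latter is used only for the DVR setting, so you may want to check that the reference you cite really covers the pullback modules of types (3) and (4).

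For the finite-dimensional top, your route is genuinely different and more direct than the paper's. You simply observe that in each module of Corollary~\ref{3.3.3}(2)--(4) the middle term of the separated presentation is $\overline{R}$, so $S/PS\cong\overline{R}$ is one-dimensional. The paper instead argues indirectly: it first shows that each component $S_i$ is either torsion or torsion-free, by noting that if the torsion submodule $T(S_i)$ were proper and nonzero it would be a prime (hence pseudo-absorbing primary) submodule with $(T(S_i):_{R_i}S_i)=0$, forcing $T(S_i)=0$ by the multiplication property; it then appeals to Theorem~\ref{3.3.4} together with an additional external result \cite[Theorem~3.14]{Esmaeili}. Your argument avoids both the detour and the extra citation, reading the conclusion straight off the classification; the paper's approach, on the other hand, isolates an independently useful structural fact about the components $S_i$.
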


\begin{proof}
{ It follows from Theorem \ref{3.3.4} and  \cite[Theorem 2.9]{Ebrahimipure} that every non-zero indecomposable separated pseudo-absorbing primary multiplication $R$-module non isomorphic to $R$ is pure-injective. For the``Furthermore" statement, suppose that $S = (S_1 = S/P_2S \xrightarrow{f_1(x)} \overline{S} = S/PS \xleftarrow{f_2(x)} S_2 = S/P_1S)$ is a non-zero indecomposable separated pseudo-absorbing primary multiplication $R$-module such that $S\neq R$. Then $S_1$ and $S_2$ are pseudo-absorbing primary multiplication $R_1$-module and $R_2$-module, respectively, by Theorem \ref{3.2.3}. Suppose also that $T(S_i)$ is the torsion submodule of $S_i$ such that $T(S_i) \neq S_i$ for $i = 1,2$. Then $T(S_i)$ is a prime submodule of $S_i$ for $i = 1,2$, and so is a  pseudo-absorbing primary submodule of $S_i$ such that $(T(S_i) :_{R_i} S_i) = 0$ for $i = 1,2$ (see \cite[Lemma 3.8]{Moore}). Thus $T(S_i) = 0$  for each $i = 1,2$, and hence $S_i$ is either torsion or torsion-free for each $i = 1,2$. Now apply Theorem \ref{3.3.4} and \cite[Theorem 3.14]{Esmaeili} to see that every non-zero indecomposable separated pseudo-absorbing primary multiplication $R$-module non isomorphic to $R$ has a finite-dimensional top.
}
\end{proof}

\begin{center}{\section{The non-separated pseudo-absorbing primary multiplication modules over a pullback ring
}}\end{center}

Let the situation be as in \eqref{eq:1}, so that $R$ is a pullback ring as in \eqref{eq:1} and $S = (S_1 = S/P_2S \xrightarrow{f_1(x)} \overline{S} = S/PS \xleftarrow{f_2(x)} S_2 = S/P_1S)$ is a separated $R$-module. In this section, the indecomposable non-separated pseudo-absorbing primary multiplication modules with finite-dimensional top over a pullback of two local Dedekind domains are described.  In fact, it turns out that non-separated pseudo-absorbing primary multiplication modules with finite-dimensional top can be obtained by amalgamating finitely many separated indecomposable pseudo-absorbing primary multiplication modules.

Suppose that $M$ is an indecomposable non-separated pseudo-absorbing primary multiplication $R$-module
with finite-dimensional top and the exact sequence $0 \rightarrow K \xrightarrow{i} S \xrightarrow{\varphi} M \rightarrow 0$ is a separated representation of $M$. We show that $S$ is a direct sum of only finitely many indecomposable pseudo-absorbing primary multiplication modules. First we show that 
$M$ is a pseudo-absorbing primary multiplication $R$-module if and only if $S$ is a pseudo-absorbing primary multiplication $R$-module. 

\begin{prop} \label{411}
Suppose that $R$ is a pullback ring as described in \eqref{eq:1} and $0 \rightarrow K \xrightarrow{i} S \xrightarrow{\varphi} M \rightarrow 0$ is a separated representation of $M$. If $M$ is non-separated, then $(T :_R S) \notin\{0, P_1^n \oplus 0, 0\oplus P_2^m\}$ for each submodule $T$ of $S$ and $m,n \geq 1$. 
\end{prop}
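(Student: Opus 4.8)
The statement claims that if $M$ is non-separated and $0 \to K \to S \xrightarrow{\varphi} M \to 0$ is a separated representation, then for every submodule $T$ of $S$ the colon ideal $(T:_R S)$ cannot equal $0$, $P_1^n \oplus 0$, or $0 \oplus P_2^m$. I would argue by contradiction on each of the three forbidden values, exploiting the structural facts already recorded about separated representations: namely (from the Levy results cited in the introduction) that $K = \operatorname{Ker}(\varphi)$ is an $\overline{R}$-module with $PK = 0$, that $K \subseteq PS$, and that $P_iS \cap K = 0$ for each $i$; together with the fact that $M$ non-separated means $P_1M \cap P_2M \neq 0$, equivalently $S$ is \emph{not} equal to $S_1 \oplus S_2$ in the pullback decomposition, i.e. $\overline{S} \neq 0$ and $K \neq 0$.

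First, suppose $(T:_R S) = 0$ for some submodule $T$. The natural move is to specialize to $T = 0$: we have $(0:_R S) = \operatorname{ann}_R(S)$, and I would show directly that $\operatorname{ann}_R(S) = 0$ is incompatible with $M$ being non-separated — actually the cleaner route is to observe that if \emph{any} $(T:_RS) = 0$ then in particular $\operatorname{ann}_R S \subseteq (T:_R S) = 0$, so $\operatorname{ann}_R S = 0$. Now use that $(0 \oplus P_2)(P_1 \oplus 0) = 0$ in $R$: if $\overline{S} = 0$ we'd have $S = S_1 \oplus S_2$ forcing $M$ separated (since a quotient of a separated module by a submodule $K$ with $P_iS \cap K = 0$... — here I'd instead directly quote that the separated representation of a separated module is the identity, so $K = 0$ and $M = S$ separated), contradiction. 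Hence $\overline{S} = S/PS \neq 0$; but then I need to derive $\operatorname{ann}_R S \neq 0$, which is false in general (e.g. $S = R$), so the argument must use non-separatedness more carefully: $K \neq 0$, $K \subseteq PS$, and $K$ is killed by $P$. I would take $0 \neq k \in K \subseteq PS$; writing $k$ in the pullback coordinates and using $P_iS \cap K = 0$, the element $k$ has nonzero components in both $S_1$ and $S_2$ directions, which constrains $S$ enough to produce a nonzero annihilator or, more likely, to directly contradict one of Levy's conditions. The honest expectation is that the argument runs: $(T:_RS) = 0$ $\Rightarrow$ (by the case analysis structure used in Theorem \ref{3.3.4}, Case one, via \cite[Lemma 2.7(i)]{Ebrahimipure}) $S = S_1 \oplus S_2$ $\Rightarrow$ $M$ separated, contradiction — so this case reduces to quoting the same lemma.

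For the cases $(T:_R S) = P_1^n \oplus 0$ and $(T:_R S) = 0 \oplus P_2^m$ (symmetric, so handle one), suppose $(T:_R S) = P_1^n \oplus 0$. Then $(P_1^n \oplus 0)S \subseteq T$ and, applying $(0 \oplus P_2)$, we get $(0 \oplus P_2)(P_1^n \oplus 0) S = 0 \subseteq$ nothing useful directly; instead I would use $(P_1^n \oplus 0) \subseteq (T:_RS)$ means $P_1^n$ annihilates $S/T$ on the first coordinate but $S/T$ is unbounded on the second. The cleaner approach: $(T:_RS) = P_1^n \oplus 0$ has radical $P_1 \oplus 0 = P_1$, a non-maximal prime; I'd show $S_2 = S/P_1 S$ must then be a torsion-free $R_2$-module of a shape that, combined with $\overline{S}\neq 0$ and $K\neq 0$, contradicts $P_2 S \cap K = 0$ or $K \subseteq PS$. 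Concretely: $(0\oplus P_2)^k$ never lands in $(T:_RS)$, so $S$ has "infinite height" in the second factor; but $K$ is finitely structured (an $\overline R$-space inside $PS$), and pushing $K$ down via the infinite-height second coordinate will violate $K \cap P_2 S = 0$.

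**Main obstacle.** The genuinely delicate step is the third case ($(T:_RS) = P_1^n \oplus 0$ and its mirror): one must convert "$S$ is unbounded/torsion-free in one pullback factor" into a contradiction with the three Levy conditions on $K$, and this requires understanding which indecomposable separated summands of $S$ (from the list in Corollary \ref{3.3.3}) can carry such a $T$ and how $K$ embeds across the amalgamation. I expect the argument to run: decompose $S = \bigoplus S^{(j)}$ into indecomposables (justified later in the paper for finite-dimensional top, or here via the separated representation structure), note $(T \cap S^{(j)} :_R S^{(j)}) \supseteq (T:_RS)$ for each $j$, so some $S^{(j)}$ has colon ideal $P_1^n \oplus 0$ forcing $S^{(j)}$ of type (4) in Corollary \ref{3.3.3} — i.e. $S^{(j)} = (R_1/P_1^a \to \overline R \leftarrow R_2)$ with a torsion-free second factor — and then the amalgamation of $K$ across such a summand together with $K \subseteq PS$ and $P_2 S \cap K = 0$ is impossible because the $R_2$-torsion-free part $R_2$ has $P_2^k R_2 \neq 0$ for all $k$ while $K$ (being $P$-torsion) would have to meet it. I would write out this last incompatibility as the crux, leaving the routine pullback-coordinate bookkeeping implicit.
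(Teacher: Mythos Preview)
Your plan has two genuine gaps. For the case $(T:_R S)=0$, your ``honest expectation'' that one can quote \cite[Lemma 2.7(i)]{Ebrahimipure} to get $S=S_1\oplus S_2$ does not work: $(T:_R S)=0$ only yields $\operatorname{ann}_R S=0$, which certainly does \emph{not} force $\overline S=0$ (take $S=R$). The paper's argument here is quite different and is in fact the most substantial part of the proof. From $\operatorname{ann}_R S=0$ one first shows $\operatorname{ann}_R M=0$ (if $rM=0$ then $rS\subseteq K$, so $rPS\subseteq PK=0$, whence $rP=0$ and $r=0$). Then one picks a nonzero $m\in P_1M\cap P_2M$ (this is where non-separatedness is used), writes $m=(p_1^s,0)m_1=(0,p_2^t)m_2$, deduces $Pm=0$, lifts $m$ to $x\in S$, and uses that $\varphi$ is injective on each $P_iS$ (a Levy property of separated representations) to get $P_1x=P_2x=0$. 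Passing to the separated representation of the cyclic module $Rm$ (via \cite[Lemma 4.1]{Dolati}) then gives $K\subseteq PRx=0$, so $M\cong S$ is separated, a contradiction.

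For the case $(T:_R S)=P_1^n\oplus 0$, your proposed route through an indecomposable decomposition of $S$ is circular: that decomposition is only established later (Corollary \ref{444}) and requires the finite-dimensional-top hypothesis, which is absent in Proposition \ref{411}. You also misidentify this as the delicate case; in the paper it is the \emph{easy} one. The argument is: since $\operatorname{ann}_R S\subseteq (T:_R S)=P_1^n\oplus 0$, either $\operatorname{ann}_R S=0$ (done by the first case) or $\operatorname{ann}_R S=P_1^{n_1}\oplus 0$ for some $n_1\ge n$. In the latter event $(P_1^{n_1}\oplus 0)S=0$, so $P_1^{n_1}\oplus P_2\subseteq ((0\oplus P_2)S:_R S)$; then \cite[Proposition 4.2]{Dolati} gives $K\subseteq P^{n_1}S\subseteq (P_1^{n_1}\oplus P_2)S\subseteq (0\oplus P_2)S$, and the Levy condition $K\cap P_2S=0$ forces $K=0$, again contradicting non-separatedness. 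No decomposition, no torsion-free summand analysis is needed.
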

\begin{proof}
{Let $T$ be a submodule of $S$ such that $(T :_R S) \in\{0, P_1^n \oplus 0, 0\oplus P_2^m\}$ for some $m,n \geq 1$. First, suppose that $(T :_R S) = 0$, and so $(0 :_R S) = 0$. Let $r = (r_1, r_2)\in (0 :_R M)$. Then $rM = rS/K = 0$, which means that $rS \subseteq K$. Hence $rPS \subseteq PK = 0$, by \cite[Proposition 2.4]{Levy}, and so $rP = 0$. Therefore $r_1p_1 = 0 = r_2p_2$; thus $r = (0, 0)$, and so $(0 :_R M) = 0$. This implies that $(P_1\oplus 0)\nsubseteq (0 :_R M)$ and $(0\oplus P_2)\nsubseteq (0 :_R M)$. Without loss of generality, assume that $(P_1\oplus 0)M \cap (0\oplus P_2)M\neq 0$; hence there exists a non-zero $m\in (P_1\oplus 0)M \cap (0\oplus P_2)M$. But $\varphi$ is an $R$-homomorphism of $S$ onto $M$, then there exists $x \in S$ such that $m = \varphi(x)$. Since $\varphi^{-1}(Rm) = \varphi^{-1}(\varphi(Rx)) = Rx$ and $Rm \neq 0$, it follows from \cite[Lemma 4.1]{Dolati} that $0 \rightarrow K \rightarrow Rx \rightarrow Rm \rightarrow 0$ is a separated representation of $Rm$ such that $K \subseteq PRx$. Therefore, there exist $m_1, m_2 \in M$ and $s,t\in \mathbb{N}$ such that $m = (p_1^s, 0)m_1 = (0, p_2^t)m_2$. It follows that $Pm = (P_1\oplus P_2)m = 0$, and hence $\varphi(Px) = 0$. Thus $\varphi(P_1x) = \varphi(P_2x) = 0$. Since $\varphi$ is one-to-one on $P_1S$ and on $P_1S$, we conclude that $Px = 0$. Therefore $K \subseteq PRx = 0$, and hence $M \cong S$ is a separated $R$-module which is a contradiction. So suppose that $(T :_R S) = P_1^n \oplus 0$ for some $n \geq 1$. Then $(0 :_R S) \subseteq (T :_R S)$ implies that either $(0 :_R S) = 0$ or $(0 :_R S) = P_1^{n_1} \oplus 0$ for some $n_1 \geq n$. The first case follows from the above comment. Assume that $(0 :_R S) = P_1^{n_1} \oplus 0$ for some $n_1 \geq n$. Then $(P_1^{n_1}\oplus 0)S = 0$, which implies that $P_1^{n_1}\oplus 0 \subseteq ((0\oplus P_2)S :_R S)$. Since $0\oplus P_2 \subseteq ((0\oplus P_2)S :_R S)$, we conclude that $P_1^{n_1}\oplus  P_2 \subseteq ((0\oplus P_2)S :_R S)$. Therefore, by \cite[Proposition 4.2]{Dolati}, we get  $K \subseteq P^{n_1}S \subseteq (P_1^{n_1} \oplus P_2)S\subseteq (0\oplus P_2)S$. Thus $K = 0$ because $K \cap (0 \oplus P_2)S = 0$, by \cite[Proposition 2.3]{Levy}, and hence $M \cong S$ is a separated $R$-module which is a contradiction. A similar argument shows that $(T :_R S) \neq 0\oplus P_2^m$ for every $m \geq 1$, and the proof is complete.
}
\end{proof}

\begin{thm}\label{422}
Suppose that $R$ is a pullback ring as described in \eqref{eq:1} and $0 \rightarrow K \xrightarrow{i} S \xrightarrow{\varphi} M \rightarrow 0$ is a separated representation of $M$ such that $M$ is non-separated $R$-module. Then $M$ is a pseudo-absorbing primary multiplication $R$-module if and only if $S$ is a pseudo-absorbing primary multiplication $R$-module.
\end{thm}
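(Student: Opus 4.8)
The plan is to transport pseudo‑absorbing primary submodules back and forth along the epimorphism $\varphi\colon S\to M$ of the separated representation, using the standard dictionary: $\varphi$ is onto, $K=\mathrm{Ker}(\varphi)$ satisfies $K\subseteq PS$ and $PK=0$ by \cite[Propositions~2.3 and 2.4]{Levy}, and $K\cap P_iS=0$ for $i=1,2$. Two formal consequences will be used repeatedly: for every submodule $L$ of $S$ with $K\subseteq L$ one has $(L:_RS)=(\varphi(L):_RM)$, since $rS\subseteq L\iff rM=\varphi(rS)\subseteq\varphi(L)$; and $\varphi^{-1}(\varphi(X))=X+K$, $\varphi^{-1}(IM)=IS+K$ for every submodule $X$ of $S$ and every ideal $I$ of $R$.

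For the implication ``$S$ pseudo‑absorbing primary multiplication $\Rightarrow$ $M$ pseudo‑absorbing primary multiplication'', let $N$ be a pseudo‑absorbing primary submodule of $M$ and set $L=\varphi^{-1}(N)$. Then $K\subseteq L\subsetneq S$ and $\varphi(L)=N$, and $(L:_RS)=(N:_RM)$ is $2$-absorbing primary, so $L$ is a pseudo‑absorbing primary submodule of $S$; hence $L=IS$ for some ideal $I$ of $R$, and therefore $N=\varphi(L)=IM$, as required.

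For the converse, let $T$ be a pseudo‑absorbing primary submodule of $S$. Since $S$ is separated, Proposition~\ref{3.1.1}$(3)$ forces $(T:_RS)\in\{0,\ 0\oplus P_2^m,\ P_1^n\oplus 0,\ P_1^n\oplus P_2^m\}$, and, $M$ being non‑separated, Proposition~\ref{411} eliminates the first three; thus $(T:_RS)=P_1^n\oplus P_2^m$ for some $n,m\ge 1$, so $(P_1^n\oplus P_2^m)S\subseteq T$. Applying Proposition~\ref{411} to the zero submodule of $S$ shows, in addition, that $(0:_RS)=P_1^a\oplus P_2^b$ with $a\ge n$ and $b\ge m$; in particular $S$ is bounded. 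It remains to prove $T=(P_1^n\oplus P_2^m)S$. First $\varphi(T)\ne M$: otherwise $S=T+K\subseteq T+PS$ gives $S/T=P(S/T)$, while $S/T$ is annihilated by $P_1^n\oplus P_2^m\supseteq P^{\max(n,m)}$, so $S/T=0$, contrary to $T\subsetneq S$. Hence $(\varphi(T):_RM)=(T+K:_RS)$ is a proper ideal containing $P_1^n\oplus P_2^m$, so it equals $P_1^{n'}\oplus P_2^{m'}$ for some $n'\le n$, $m'\le m$ and is $2$-absorbing primary by Proposition~\ref{3.1.1}$(1)$; thus $\varphi(T)$ is pseudo‑absorbing primary in $M$, and since $M$ is pseudo‑absorbing primary multiplication, $\varphi(T)=(P_1^{n'}\oplus P_2^{m'})M$. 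Taking preimages, $T+K=(P_1^{n'}\oplus P_2^{m'})S+K$.

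The step I expect to be the main obstacle is deducing from $T+K=(P_1^{n'}\oplus P_2^{m'})S+K$ that $T=(P_1^n\oplus P_2^m)S$. By the modular law (using $(P_1^n\oplus P_2^m)S\subseteq T$), once one knows $(n',m')=(n,m)$ this reduces to the inclusion $T\cap K\subseteq(P_1^n\oplus P_2^m)S$. Both points are genuinely delicate: for a separated representation of a module that is \emph{not} pseudo‑absorbing primary multiplication, a pseudo‑absorbing primary submodule $T$ of $S$ may strictly exceed $(T:_RS)S$ precisely by a sub‑$\overline{R}$‑space of $T\cap K$, so the extra summand $K$ in $\varphi^{-1}\varphi(T)$ cannot be ignored and the hypothesis on $M$ must be used essentially here. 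The route is to pin down how deeply $K$ sits inside $S$: from $PK=0$, the triviality of $K\cap P_iS$, the bound $(0:_RS)=P_1^a\oplus P_2^b\subseteq(T:_RS)$ obtained above from the non‑separatedness of $M$, and \cite[Proposition~4.2]{Dolati} — the same instrument that drives the proof of Proposition~\ref{411} — one determines the smallest ideal of the form $P_1^{\,\cdot}\oplus P_2^{\,\cdot}$ whose product with $S$ contains $K$, and then feeds the multiplication property of $M$ back through $\varphi(T)=(P_1^{n'}\oplus P_2^{m'})M$ to force $(n',m')=(n,m)$ and $T\cap K\subseteq(P_1^n\oplus P_2^m)S$. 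Granting this, $T=(P_1^n\oplus P_2^m)S=(T:_RS)S$, so $S$ is pseudo‑absorbing primary multiplication, and the theorem follows.
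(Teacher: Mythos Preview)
Your forward direction ($S\Rightarrow M$) is fine and matches the paper's one-line reduction to quotients.

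For the converse, you have correctly isolated $(T:_RS)=P_1^n\oplus P_2^m$ via Propositions~\ref{3.1.1}(3) and~\ref{411}, but you then take a detour through $\varphi(T)$ and $T+K$ that leaves the endgame incomplete: you never actually prove $(n',m')=(n,m)$ or $T\cap K\subseteq(P_1^n\oplus P_2^m)S$, only outline a ``route''. The paper short-circuits all of this with one observation you are missing: $K\subseteq T$. Assuming (without loss of generality) $n\ge m$, \cite[Proposition~4.2]{Dolati} gives $K\subseteq P^nS\subseteq(P_1^n\oplus P_2^m)S\subseteq T$. Once $K\subseteq T$, both of your ``delicate'' points evaporate: $T+K=T$, so $(\varphi(T):_RM)=(T:_RS)=P_1^n\oplus P_2^m$ and hence $(n',m')=(n,m)$; and $T\cap K=K\subseteq(P_1^n\oplus P_2^m)S$ trivially. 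One then reads off $T/K=(P_1^n\oplus P_2^m)(S/K)=((P_1^n\oplus P_2^m)S)/K$ from the pseudo-absorbing primary multiplication property of $M\cong S/K$, whence $T=(P_1^n\oplus P_2^m)S$.

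So the gap is not in strategy but in execution: you invoke \cite[Proposition~4.2]{Dolati} only vaguely (``pin down how deeply $K$ sits inside $S$''), whereas its precise content --- $K\subseteq P^{\max(n,m)}S$ --- is exactly the fact that collapses your remaining work to nothing. Your auxiliary steps (showing $\varphi(T)\ne M$, bounding $(0:_RS)$) are correct but unnecessary once $K\subseteq T$ is in hand.
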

\begin{proof}
{First, suppose that $S$ is a pseudo-absorbing primary multiplication $R$-module. Since $S\cong M/K$, the proof follows from Proposition \ref{2.3} $(1)$. We now turn to the converse statement, which is the non-trivial part of
the result. Let $M$ be a pseudo-absorbing primary multiplication $R$-module and $T$ a pseudo-absorbing primary submodule of $S$. It follows from Proposition \ref{411} that $(T :_R S) \notin\{0, P_1^n \oplus 0, 0\oplus P_2^m\}$ for each submodule $T$ of $S$ and $m,n \geq 1$, as $M$ is non-separated, and so Proposition \ref{3.1.1} $(3)$ gives $(T :_R S) = P_1^n \oplus P_2^m$ for some $m,n\geq 1$. Assume that $n\geq m$. Therefore, by \cite[Proposition 4.2]{Dolati}, we get $K \subseteq P^nS \subseteq (P_1^n \oplus P_2^m)S\subseteq T$. By using the fact that $T/K$ is a pseudo-absorbing primary submodule of $S/K \cong M$ and $M$ is pseudo-absorbing primary multiplication, we can see that $T/K = (T/K :_R S/K)S/K = (P_1^n \oplus P_2^m)(S/K) = ((P_1^n \oplus P_2^m)S + K)/K = ((P_1^n \oplus P_2^m)S)/K$, and thus $T = (P_1^n \oplus P_2^m)S$. Hence $S$ is a pseudo-absorbing primary multiplication $R$-module.
}
\end{proof}

\begin{cor}\label{inj}
Suppose that $R$ is a pullback ring as described in \eqref{eq:1} and $0 \rightarrow K \xrightarrow{i} S \xrightarrow{\varphi} M \rightarrow 0$ is a separated representation of $M$ such that $M$ is an indecomposable non-separated pseudo-absorbing primary multiplication $R$-module with finite-dimensional top over $\overline{R}$. Then $S$ is a pure-injective $R$-module with finite-dimensional top.
\end{cor}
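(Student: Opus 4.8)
The plan is to translate the hypotheses on $M$ into statements about $S$ and then read off the conclusion from the classification already obtained. Since $M$ is non-separated, Theorem \ref{422} immediately gives that $S$ is a pseudo-absorbing primary multiplication $R$-module, so what remains is to prove that $S$ is pure-injective and has finite-dimensional top.

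The finite-dimensional top is easy. Since the given sequence $0\to K\to S\to M\to 0$ is a separated representation we have $K\subseteq PS$, so $\varphi$ induces an epimorphism $S/PS\to M/PM$ whose kernel is $(PS+K)/PS=0$; hence $S/PS\cong M/PM$, which is finite-dimensional over $\overline R$ by hypothesis. This also bounds the number of summands in any decomposition of $S$.

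For pure-injectivity I would first argue that $S$ is finitely generated, in fact of finite length. Because $M$ is non-separated, Proposition \ref{411} applied to the submodule $T=0$ shows $(0:_RS)\notin\{0,\ P_1^n\oplus 0,\ 0\oplus P_2^m\}$; since the ideals of $R$ are precisely $0$, $P_1^n\oplus 0$, $0\oplus P_2^m$, $P_1^n\oplus P_2^m$ and $R$, and $S\neq 0$, this forces $(0:_RS)=P_1^n\oplus P_2^m$ for some $n,m\geq 1$. Then $P^kS=0$ for $k=\max\{n,m\}$, so $S$ is a module over the Artinian local ring $R/P^k$, whose radical $P/P^k$ is nilpotent; a module over such a ring with finite-dimensional top is finitely generated (lift a basis of the top to a finitely generated submodule $S'$, so that $S=S'+PS$, whence $S/S'=(P/P^k)(S/S')=0$ by nilpotence). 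Thus $S$ has finite length over $R/P^k$, so $S=N_1\oplus\cdots\oplus N_t$ for finitely many indecomposable submodules $N_j$.

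Finally I would identify the $N_j$. Each is a direct summand of the separated module $S$, hence separated (if $P_1S\cap P_2S=0$ then certainly $P_1N_j\cap P_2N_j=0$), is pseudo-absorbing primary multiplication by Proposition \ref{242} $(2)$, and is indecomposable, so by Theorem \ref{3.3.4} it is isomorphic to one of the modules $(1)$--$(4)$ of Corollary \ref{3.3.3}. No $N_j$ is isomorphic to $R$: otherwise $(0:_RS)=\bigcap_j(0:_RN_j)\subseteq(0:_RR)=0$, contradicting $(0:_RS)=P_1^n\oplus P_2^m\neq 0$. Hence each $N_j$ is one of the modules $(2)$--$(4)$, which are pure-injective by Corollary \ref{3.5.5}; a finite direct sum of pure-injective modules is pure-injective, so $S$ is pure-injective with finite-dimensional top, as claimed. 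The one point that genuinely requires care is the finiteness of the decomposition: an infinite direct sum of modules of the shapes in Corollary \ref{3.3.3} need not be pure-injective, so one really must extract finite generation of $S$ from Proposition \ref{411} together with the finite-dimensional-top hypothesis rather than trying to decompose $S$ directly.
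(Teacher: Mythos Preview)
Your argument is correct. The paper's own proof is three lines: Theorem~\ref{422} makes $S$ pseudo-absorbing primary multiplication, then it writes ``Hence $S$ is a pure-injective $R$-module, by Corollary~\ref{3.5.5}'', and finally cites \cite[Proposition~2.6(i)]{Ebrahimipure} for $S/PS\cong M/PM$. But Corollary~\ref{3.5.5} speaks only of \emph{indecomposable} separated modules, so that middle step tacitly presupposes a decomposition of $S$ into such pieces---something the paper only establishes afterwards, in Corollary~\ref{444}, by importing the arguments of \cite[Lemma~3.1, Propositions~3.2 and 3.4]{Ebrahimipure} (which in turn use the pure-injectivity of $S$).

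Your route is genuinely different and avoids this. By applying Proposition~\ref{411} with $T=0$ you obtain $(0:_RS)=P_1^n\oplus P_2^m$, whence $P^kS=0$; together with the finite-dimensional top and a Nakayama argument over the Artinian local ring $R/P^k$, this forces $S$ to have finite length. Only then do you decompose $S$ into indecomposable separated summands, rule out $R$ via the nonzero annihilator, and invoke Corollary~\ref{3.5.5} legitimately on each summand. This is more elementary and self-contained---no appeal to the pure-injective machinery of \cite{Ebrahimipure} is needed---and it delivers the finite decomposition of Corollary~\ref{444} as a byproduct rather than as a later step, sidestepping the apparent circularity in the paper's ordering.
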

\begin{proof}
{By Theorem \ref{422}, $S$ is a pseudo-absorbing primary multiplication $R$-module. Hence $S$ is a pure-injective $R$-module, by Corollary \ref{3.5.5}. It follows from \cite[Proposition 2.6 (i)]{Ebrahimipure} that $S/PS\cong M/PM$; thus $S$ is finite-dimensional top.
}
\end{proof}

Our next goal is to show that if $M$ is an indecomposable non-separated pseudo-absorbing primary multiplication $R$-module
with finite-dimensional top and the exact sequence $0 \rightarrow K \xrightarrow{i} S \xrightarrow{\varphi} M \rightarrow 0$ is a separated representation of $M$, then $S$ is a direct sum of only finitely many indecomposable pseudo-absorbing primary multiplication modules. But it seems that the solving of this problem depends on the results found in \cite[Lemma 3.1, Proposition 3.2 and Proposition 3.4]{Ebrahimipure}. In fact, the proofs of these results are perhaps needed more than the result itself. Note that the key properties in those results are the pure-injectivity of $S$ which is obtained from the pure-injectivity of $M$ (see \cite[Proposition2.6 (ii)]{Ebrahimipure}), the indecomposability and the nonseparability of $M$. But in this paper the pure-injectivity of $S$ is obtained from the pseudo-absorbing primary multiplicity of $M$ (see Corollary \ref{inj}). Therefore, we can replace the statement ``$M$ is pure-injective" by ``$M$ is pseudo-absorbing primary multiplication". However, there is one beautiful existence result concerning the direct summands of $S$.

\begin{cor}\label{444}
Suppose that $R$ is a pullback ring as described in \eqref{eq:1} and $0 \rightarrow K \xrightarrow{i} S \xrightarrow{\varphi} M \rightarrow 0$ is a separated representation of $M$ such that $M$ is an indecomposable non-separated pseudo-absorbing primary multiplication $R$-module with finite-dimensional top over $\overline{R}$. Then $S$ is a direct sum of finitely many indecomposable pseudo-absorbing primary multiplication modules.
\end{cor}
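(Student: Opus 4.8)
The plan is to bootstrap from Corollary \ref{inj}, which already tells us that $S$ is a pure-injective $R$-module with finite-dimensional top. First I would invoke Theorem \ref{422} to replace the hypothesis ``$M$ is pseudo-absorbing primary multiplication'' by ``$S$ is pseudo-absorbing primary multiplication'', so that all the structural information we need about $S$ is now intrinsic to $S$. Since $S$ is separated, it decomposes (as an $R$-module) according to the pullback datum $S = (S_1 \xrightarrow{f_1} \overline{S} \xleftarrow{f_2} S_2)$ with $S_i$ a module over the discrete valuation domain $R_i$, and by Theorem \ref{3.2.3} each $S_i$ is a pseudo-absorbing primary multiplication $R_i$-module. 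By the Corollary following Theorem \ref{prev}, each $S_i$ is therefore a direct sum of copies of the indecomposables $R_i/P_i^{k}$ (and possibly a copy of $R_i$ itself, but the finite-dimensional top together with pure-injectivity of $S$ rules out any torsion-free summand being present infinitely often, and the non-separated case excludes it outright as in the proof of Corollary \ref{3.5.5} via Proposition \ref{411}).

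Next I would pin down why the number of summands is finite rather than merely countable. Here is where the finite-dimensional top enters decisively: $S/PS \cong M/PM$ by \cite[Proposition 2.6 (i)]{Ebrahimipure}, and this is a finite-dimensional $\overline{R}$-vector space, say of dimension $d$. Each indecomposable separated pseudo-absorbing primary multiplication summand listed in Corollary \ref{3.3.3} contributes at least one dimension to the top (the modules of type $(2)$ contribute one, those of type $(3)$ and $(4)$ contribute one as well, and a hypothetical copy of $R$ would contribute one), so $S$ can have at most $d$ indecomposable summands. The pure-injectivity of $S$ is what guarantees that $S$ actually decomposes as a direct sum of indecomposables at all — one applies the theory of pure-injective decompositions (a pure-injective module over a commutative Noetherian ring is a direct sum of indecomposables, each with local endomorphism ring) — and then the top-dimension bound forces finiteness. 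This is essentially the content alluded to in the remark preceding the corollary, namely that \cite[Lemma 3.1, Proposition 3.2, Proposition 3.4]{Ebrahimipure} carry over with ``$M$ pure-injective'' replaced by ``$M$ pseudo-absorbing primary multiplication'', because the only properties those proofs used were the pure-injectivity of $S$, the indecomposability of $M$, and the non-separability of $M$ — all of which we still have.

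So the proof is short: apply Theorem \ref{422} to get that $S$ is pseudo-absorbing primary multiplication; apply Corollary \ref{inj} to get that $S$ is pure-injective with finite-dimensional top; use the structure theory of pure-injectives over a commutative Noetherian ring to write $S$ as a direct sum of indecomposable summands; observe each such summand is itself a separated pseudo-absorbing primary multiplication module (Proposition \ref{242} $(2)$), hence appears in the list of Theorem \ref{3.3.4}; and finally count dimensions in the top $S/PS \cong M/PM$ to conclude the sum is finite.

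The main obstacle I anticipate is not any single step but the justification that $S$ genuinely splits as a direct sum of \emph{indecomposable} pure-injective modules and that this decomposition is compatible with reading off the top dimension — i.e. that no ``infinitely generated indecomposable'' pure-injective (such as $E(R/P)$-like objects or Prüfer-type modules over the pullback) can occur as a summand. This is exactly why Proposition \ref{2.5.5} and the classification in Theorem \ref{prev}/Theorem \ref{3.3.4} were proved first: they exclude the divisible/injective-hull-type modules from being pseudo-absorbing primary multiplication, so every indecomposable summand of $S$ is one of the finitely-presented modules in Corollary \ref{3.3.3}, each contributing positively to $\dim_{\overline{R}}(S/PS) = \dim_{\overline{R}}(M/PM) < \infty$. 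Granting the cited results of \cite{Ebrahimipure}, the argument closes with no further difficulty.
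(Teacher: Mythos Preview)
Your proposal follows essentially the same route as the paper: invoke Theorem~\ref{422} to transfer the pseudo-absorbing primary multiplication property to $S$, apply Corollary~\ref{inj} for pure-injectivity and finite-dimensional top, appeal to the decomposition results \cite[Lemma 3.1, Propositions 3.2 and 3.4]{Ebrahimipure} (which carry over verbatim once $S$ is known to be pure-injective), and use Proposition~\ref{242}(2) to see that each summand is again pseudo-absorbing primary multiplication and hence on the list of Theorem~\ref{3.3.4}. Your dimension-counting in $S/PS \cong M/PM$ makes the finiteness step more explicit than the paper does.

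One point of difference worth noting: the paper's proof includes an additional short argument excluding $R$ itself as a direct summand of $S$ --- namely, if $S = R \oplus N$ then $K \subseteq N$ because $K \subseteq \mathrm{Soc}(S)$ and $\mathrm{Soc}(R) = 0$, whence $M \cong R \oplus N/K$, contradicting indecomposability of $M$. You allude to this exclusion parenthetically but attribute it to Corollary~\ref{3.5.5} and Proposition~\ref{411}, which do not supply it; the socle argument is the correct mechanism. That said, the statement of Corollary~\ref{444} as written does not require ruling out $R$ (it is, after all, an indecomposable pseudo-absorbing primary multiplication module), so your argument suffices for the corollary itself; the exclusion of $R$ only becomes essential for Corollary~\ref{455} and Theorem~\ref{main}.
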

\begin{proof} 
{First we show that $R$ does not occur among the direct summands of $S$. Assume that $S = R \oplus N$ for some submodule $N$ of $S$. Then $K \subseteq N$, because Soc$(R) = 0$. Hence $M \cong N/K \oplus R$, a contradiction, since $M$ is indecomposable and non-separated. Therefore, in view of the observations in the above paragraph, $S$ is a direct sum of finitely many indecomposable pseudo-absorbing primary multiplication modules (described in $(2)$--$(4)$ of Corollary \ref{3.3.3}), by Proposition \ref{242} $(2)$ and Theorem \ref{422}. 
}
\end{proof}
\begin{cor} \label{455} Let the situation be as in {\rm Corollary \ref{444}}. Then the following statements hold:

$(1)$ The quotient fields $Q(R_1)$, $Q(R_2)$ do not occur among the direct summands of $S$.

$(2)$ At most two copies of modules of infinite length can occur among the indecomposable summands of $S$.
\end{cor}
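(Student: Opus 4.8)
The plan is to deduce both statements from Corollary \ref{444} together with the fact that a direct summand of a pseudo-absorbing primary multiplication module is again pseudo-absorbing primary multiplication (Proposition \ref{242} $(2)$). Since $M$ is non-separated and pseudo-absorbing primary multiplication, Theorem \ref{422} gives that $S$ is pseudo-absorbing primary multiplication, and then Theorem \ref{3.2.3} shows that $S_1=S/P_2S$ and $S_2=S/P_1S$ are pseudo-absorbing primary multiplication over $R_1$ and $R_2$ respectively. By Corollary \ref{444} we may write $S=\bigoplus_{j} S^{(j)}$ with each $S^{(j)}$ one of the indecomposable modules of types $(2)$--$(4)$ of Corollary \ref{3.3.3} (the module $R$ is excluded, as noted in the proof of Corollary \ref{444}). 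Inspecting the first components of the modules in that list, the $R_1$-module $S_1=S/P_2S$ is a direct sum of $c$ copies of $R_1$ (one from each type $(3)$ summand) together with finitely many copies of $R_1/P_1^{n}$ (from the type $(2)$ and type $(4)$ summands), and symmetrically $S_2$ is a direct sum of $d$ copies of $R_2$ (one from each type $(4)$ summand) together with copies of $R_2/P_2^{m}$.

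For $(1)$: $Q(R_1)$ and $Q(R_2)$ are divisible over $R_1$ and $R_2$, so by Proposition \ref{2.5.5} they are not pseudo-absorbing primary multiplication; since every indecomposable direct summand of $S$ is pseudo-absorbing primary multiplication and, by Corollary \ref{444}, lies in the list of Corollary \ref{3.3.3}, neither $Q(R_1)$ nor $Q(R_2)$ can occur among the direct summands of $S$.

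For $(2)$: the indecomposable summands of $S$ of infinite length are precisely those of types $(3)$ and $(4)$ — a type $(2)$ module embeds in $R_1/P_1^{n}\oplus R_2/P_2^{m}$ and so has finite length — hence it suffices to show $c+d\le 2$. I claim $c\le 1$. If $c\ge 2$, then $R_1\oplus R_1$ is a direct summand of $S_1$ and therefore, by Proposition \ref{242} $(2)$, a pseudo-absorbing primary multiplication $R_1$-module; but this is false, because the diagonal $\Delta=\{(a,a):a\in R_1\}$ is a proper submodule of $R_1\oplus R_1$ with $(\Delta:_{R_1}R_1\oplus R_1)=0$, a $2$-absorbing primary (indeed prime) ideal of the domain $R_1$, so $\Delta$ is pseudo-absorbing primary, whereas $\Delta\ne I(R_1\oplus R_1)=I\oplus I$ for every ideal $I$ of $R_1$. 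Thus $c\le 1$, and the symmetric argument over $R_2$ gives $d\le 1$, so at most two indecomposable summands of $S$ have infinite length.

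The only step calling for real work is the verification that $R_1\oplus R_1$ is not pseudo-absorbing primary multiplication over $R_1$; everything else is bookkeeping built on Corollary \ref{444}, Theorems \ref{422} and \ref{3.2.3}, and Proposition \ref{242} $(2)$. As an alternative proof of $(2)$, in the spirit of the methods recalled before Corollary \ref{444}, one can use the amalgamation-chain description of the separated representation of a non-separated indecomposable module from \cite{Ebrahimipure}: the socle of a module of type $(3)$ or $(4)$ is concentrated in a single one of the two $P_i$-directions of $S$, so such a module can occupy only an endpoint of the amalgamation chain that represents $M$, and a chain has at most two endpoints.
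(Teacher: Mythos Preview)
Your argument is correct. The paper itself does not supply a detailed proof of this corollary: immediately before Corollary~\ref{444} it points to \cite[Lemma 3.1, Proposition 3.2 and Proposition 3.4]{Ebrahimipure} and explains that those proofs go through verbatim once ``pure-injective'' is replaced by ``pseudo-absorbing primary multiplication,'' and the paragraph following Corollary~\ref{455} clarifies the mechanism via socles: since $K\subseteq\mathrm{Soc}(S)$, the amalgamation that produces $M$ from $S$ takes place in the socle, and an indecomposable summand with one-dimensional socle can occupy only an endpoint of the resulting chain. That is exactly the alternative you sketch at the end of your proposal.

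Your \emph{primary} argument for $(2)$, however, is genuinely different and more self-contained. Rather than invoking the amalgamation-chain picture from \cite{Ebrahimipure}, you pass to the components $S_1,S_2$ via Theorem~\ref{3.2.3}, count the copies of $R_i$ coming from the type-$(3)$ and type-$(4)$ summands, and then observe directly that $R_1\oplus R_1$ is not pseudo-absorbing primary multiplication over $R_1$ because the diagonal $\Delta$ has $(\Delta:_{R_1}R_1\oplus R_1)=0$ yet is not of the form $I\oplus I$. This is an elementary counterexample that does not require any of the structure theory of separated representations or the pure-injectivity results; it uses only Proposition~\ref{242}$(2)$ and Theorem~\ref{3.2.3}. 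The paper's route, by contrast, buys a finer picture (the endpoint/chain description needed later in Theorem~\ref{main}), at the cost of importing the machinery of \cite{Ebrahimipure}. For part~$(1)$ the two approaches coincide in substance: once Corollary~\ref{444} has restricted the summands to the list in Corollary~\ref{3.3.3}, $Q(R_1)$ and $Q(R_2)$ are automatically excluded, and Proposition~\ref{2.5.5} makes this explicit.
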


We are now ready for the main result of this paper. But before starting let us quickly describe its idea. Let the situation be as in \eqref{eq:1} and let $0 \rightarrow K \xrightarrow{i} S \xrightarrow{\varphi} M \rightarrow 0$ be a separated representation of an $R$-module $M$.  It follows from Corollary \ref{444} that if $M$ is indecomposable non-separated pseudo-absorbing primary multiplication  with finite-dimensional top over $\overline{R}$, then $S$ is a direct sum of finitely many indecomposable pseudo-absorbing primary multiplication modules, and we have shown that these modules are as in Corollary \ref{3.3.3}. It should be observed that $R$, $Q(R_1)$ and $Q(R_2)$ (the quotient fields of $R_1$ and $R_2$) cannot be amalgamated with any other direct summands of $S$, and hence cannot take place in any separated representation $0 \rightarrow K \xrightarrow{i} S \xrightarrow{\varphi} M \rightarrow 0$ because $(0 :_R ker(\varphi)) = (0 :_R K) = P$ which means that $K\subseteq {\rm Soc}(S)$, and so $M$ is obtained by amalgamation in the socles of the various direct summands of $S$ whereas $0 = {\rm Soc}(R) = {\rm Soc}(Q(R_1)) = {\rm Soc}(Q(R_2))$ and this clarify the proofs of Corollies \ref{444} and \ref{455}. Thus natural questions of these sort are: do these conditions give us extra conditions on the possible direct summands of $S$? How can we get  the direct summands of $M$ using the amalgamated summands of $S$? The goal of this paper is to answer these questions in the case in which $M$ is indecomposable non-separated pseudo-absorbing primary multiplication module with finite-dimensional top. Previously, these questions are answered by Levy in \cite{Levy2,Levy3} for the case of finitely generated $R$-modules $M$. We recall what we will need from that. In fact, Levy showed that the indecomposable finitely generated $R$-modules are of two non-overlapping types which he called them deleted cycle and block cycle types. We observe that deleted cycle modules are very pertinent to us. We get such modules from a direct summand, $S$, of indecomposable separated modules by amalgamating the direct summands of $S$ in pairs to form a chain but leaving the two ends unamalgamated. By considering the fact that the dimension of the socle of any finitely generated indecomposable separated module over $R$ is smaller than two, each indecomposable summand of $S$ may be amalgamated with at most two other indecomposable summands. We give an example of a non-separated module which is obtained by amalgamating the summands of a separated $R$-module $S$:
\begin{center}
$S_1 = (R_1/P_1^5 \rightarrow \overline{R} \leftarrow R_2/P_2^3) = Ra$ with $P_2^3a = 0 = P_1^5a$

$S_2 = (R_1/P_1^4 \rightarrow \overline{R} \leftarrow R_2/P_2^2) = Ra'$ with $P_1^4a' = 0 = P_2^2a'$
\end{center}
and so we can form the non-separated module $(S_1 \oplus S_2)/(R(p_2^2a - p_1^3a') = Rc +Rc'$ such that $c = a + R(p_2^2a - p_1^3a')$, $c' = a' + R(p_2^2a - p_1^3a')$ and $P_2^3c = 0 = P_1^4c' = P_2^2c = P_1^3c'$, which is obtained by identifying the ``$P_2$-part'' of the socle of the separated module $S_1$ with the ``$P_1$-part'' of the socle of the separated module $S_2$. See the following schematic version of diagram as depicted in Figure~\ref{fig:md1}:
\begin{figure}
    \begin{tikzpicture}[
            > = stealth, 
            shorten > = 1pt, 
            auto,
            node distance = 2cm, 
            semithick, 
            targetnode/.style={circle, draw=red!60, fill=white!5, very thick, minimum size=2mm},
            entrynode/.style={circle, draw=blue!60, fill=white!5, very thick, minimum size=2mm},
            scale=0.6,
        ]

        \tikzstyle{every state}=[
            draw = black,
            thick,
            fill = white,
            scale=0.6
        ]

        \node[entrynode] (a) {$a$};
        \node[state] (u2) [above right of=a] {$p_1a$};
        \node[state] (u6) [above right of=u2] {$p_1^2a$};
        \node[state] (u9) [above right of=u6] {$p_1^3a$};
        \node[state] (u10) [above right of=u9] {$p_1^4a$};
        \node[state] (u11) [above right of=u10] {$0$};
        \node[state] (u3) [right of=a] {$p_2a$};     
        \node[state] (u7) [right of=u3] {$p_2^2a$};
        \node[state] (u12) [right of=u7] {$0$};
        \node[state] (u15) [below of=u7] {$p_1^3a'$};
        \node[state] (u14) [left of=u15] {$p_1^2a'$};
        \node[state] (u4) [left of=u14] {$p_1a'$};
        \node[state] (u16) [right of=u15] {$0$};
        \node[targetnode] (a') [left of=u4] {$a'$};
        \node[state] (u17) [below right of=a'] {$p_2a'$};
        \node[state] (u18) [below right of=u17] {$0$};

        \path[->] (a) edge node {} (u2);
        \path[->] (a) edge node {} (u3);
        \path[->] (u6) edge node {} (u9);
        \path[->] (u9) edge node {} (u10);
        \path[->] (u10) edge node {} (u11);
        \path[->] (u7) edge node {} (u12);
        \path[<->] (u7) edge node {} (u15);
        \path[->] (u2) edge node {} (u6);
        \path[<-] (u4) edge node {} (a');
        \path[<-] (u7) edge node {} (u3);
    
    \path[->] (u4) edge node {} (u14);
    \path[->] (u14) edge node {} (u15);
    \path[->] (u15) edge node {} (u16);
    \path[->] (a') edge node {} (u17);
    \path[->] (u17) edge node {} (u18);

    \end{tikzpicture}
\hspace{1cm}
 \begin{tikzpicture}[
            > = stealth, 
            shorten > = 1pt, 
            auto,
            node distance = 1.8cm, 
            semithick, 
            targetnode/.style={circle, draw=red!60, fill=white!5, very thick, minimum size=0.5mm},
            entrynode/.style={circle, draw=blue!60, fill=white!5, very thick, minimum size=0.5mm},
            scale=0.3,
        ]

        \tikzstyle{every state}=[
            draw = black,
            thick,
            fill = white,
            scale=0.3
        ]      

        \node[entrynode] (a) {};
        \node (e) [right of=a]{};
        \node[state] (u2) [above right of=e] {};
        \node[state] (u6) [above right of=u2] {};
        \node[state] (u9) [above right of=u6] {};
        \node[state] (u10) [above right of=u9] {};
        \node (u11) [above right of=u10] {$0$};
        \node[state] (u3) [below right of=e] {};     
        \node[state] (u7) [below right of=u3] {};
        \node (u12) [right of=u7]{$0$};

        \node[state] (u14) [below left of=u7] {};
        \node[state] (u4) [below left of=u14] {};       
        \node (g) [below left of=u4]{};
        \node[targetnode] (a') [left of=g] {};        
        \node[state] (u17) [below right of=g] {};
        \node (u18) [below right of=u17] {$0$};

        \path[-] (a) edge node {} (e);
        \path[-] (e) edge node {} (u2);
        \path[-] (e) edge node {} (u3);
        \path[-] (u6) edge node {} (u9);
        \path[-] (u9) edge node {} (u10);
        \path[-] (u10) edge node {} (u11);
        \path[-] (u7) edge node {} (u12);
        \path[-] (u2) edge node {} (u6);
        \path[-] (u4) edge node {} (g);
        \path[-] (u7) edge node {} (u3);
    
    \path[-] (u4) edge node {} (u14);
    \path[-] (u14) edge node {} (u7);
    \path[-] (a') edge node {} (g);  
    \path[-] (g) edge node {} (u17);
    \path[-] (u17) edge node {} (u18);

    \end{tikzpicture} 
\caption{A schematic version of the diagrams of a non-separated module which is obtained by identifying the ``$P_2$-part'' of the socle of the separated module $S_1$ with the ``$P_1$-part'' of the socle of the separated module $S_2$.}
  \label{fig:md1}
\end{figure}
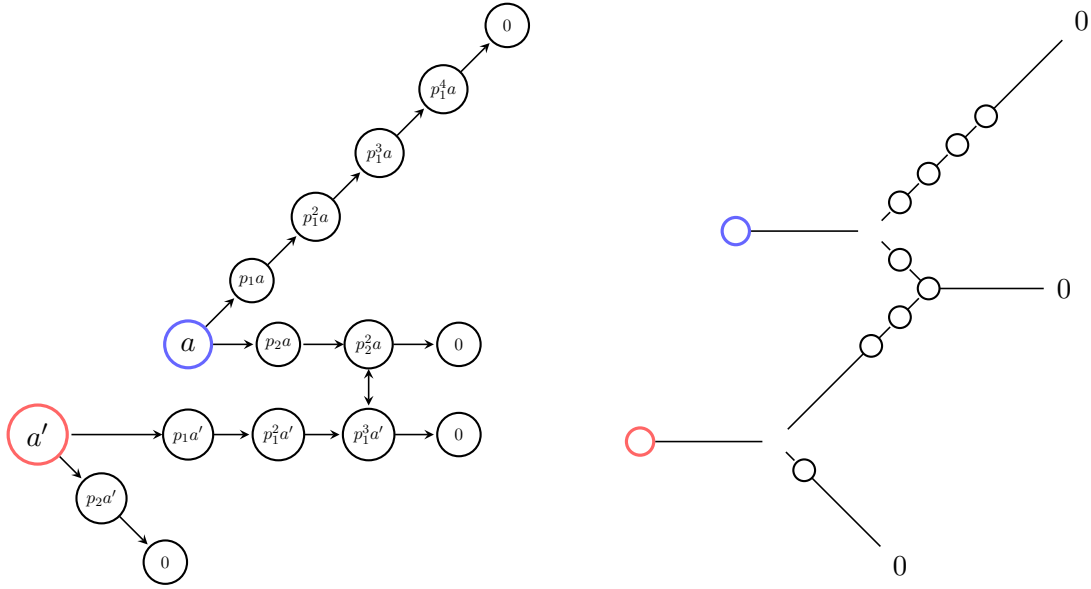

\newpage
The non-zero indecomposable pseudo-absorbing primary multiplication non-separated $R$-modules are obtained in just the same way as indecomposable finitely generated ones; we will use the same detailing but with pseudo-absorbing primary multiplication separated modules in place of the finitely generated ones. We have seen in Corollary \ref{455} that any non-zero indecomposable separated pseudo-absorbing primary multiplication  module with $1$-dimensional socle may occur only at one of the ends of the amalgamation chain, and thus we have to show that the modules obtained by these amalgamations are indecomposable pseudo-absorbing primary multiplication. Hence, the indecomposable pseudo-absorbing primary multiplication non-separated modules with finite-dimensional top will be classified in the following result. 

\begin{thm}\label{main}
Suppose that $R$ is a pullback ring as described in \eqref{eq:1}. Then the indecomposable non-separated pseudo-absorbing primary multiplication $R$-modules $M$  with finite-dimensional top over $\overline{R}$, up to isomorphism, are the following.

$(1)$ The indecomposable modules of finite length (apart from $R/P$ since it is a separated module), i.e, $M = \sum^{s}_{i=1}Ra_i$ such that $p_1^{n_s}a_s = p_2^{m_1}a_1 = 0$, $p_1^{n_i-1}a_i = p_2^{m_{i+1}-1}a_{i+1}$ where $(1 \leq i \leq s - 1)$ and $m_i, n_i \geq 2$ except the case $m_1,n_s\geq 1$.

$(2)$ $M = R/P_1^n + \sum^{s-1}_{i=1}Ra_i$ such that $p_1^{n_{s-1}}a_{s-1} = 0$, $p_1^{n-1}a_0 = p_2^{m_{1}-1}a_{1}$ and $p_1^{n_i-1}a_i = p_2^{m_{i+1}-1}a_{i+1}$ where $R/P_1^n\cong Ra_0$, $(1 \leq i \leq s - 2)$ and $n,m_i, n_i \geq 2$ except the case $n_{s-1}\geq 1$.

$(3)$ $M = \sum^{s-1}_{i=1}Ra_i + R/P_2^n$ such that $p_2^{m_{s-1}}a_{s-1} = 0$, $p_2^{n-1}b_0 = p_1^{n_{1}-1}a_{1}$ and $p_2^{m_i-1}a_i = p_1^{n_{i+1}-1}a_{i+1}$ where $R/P_2^n\cong Rb_0$, $(1 \leq i \leq s - 2)$ and $n,m_i, n_i \geq 2$ except the case $m_{s-1}\geq 1$.

$(4)$  $M = R/P_1^r + \sum^{s-2}_{i=1}Ra_i + R/P_2^n$ such that $p_2^{n-1}b_0 = p_1^{n_{1}-1}a_{1}$, $p_1^{r-1}a_0 = p_2^{m_{s-2}-1}a_{s-2}$  and $p_2^{m_{i}-1}a_{i} = p_1^{n_{i+1}-1}a_{i+1}$ where $R/P_1^r\cong Ra_0$, $R/P_2^n\cong Rb_0$, $(1 \leq i \leq s - 3)$ and $n,m_i, n_i, r \geq 2$ except the case $m_1,n_s\geq 1$.
\end{thm}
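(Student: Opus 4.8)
The plan is to prove both inclusions: every module in the list $(1)$--$(4)$ is an indecomposable non-separated pseudo-absorbing primary multiplication $R$-module with finite-dimensional top, and conversely every such $M$ appears on the list. For the forward direction, I would start from Corollary \ref{444}: the separated representation $0 \to K \xrightarrow{i} S \xrightarrow{\varphi} M \to 0$ has $S$ a finite direct sum of indecomposable separated pseudo-absorbing primary multiplication modules, which by Theorem \ref{3.3.4} are among the modules $(2)$--$(4)$ of Corollary \ref{3.3.3} (the module $R$ and the quotient fields $Q(R_i)$ are excluded by Corollary \ref{444} and Corollary \ref{455}). The key structural fact, already recorded in the discussion preceding the theorem, is that $(0 :_R K) = P$, so $K \subseteq \mathrm{Soc}(S)$; hence $M$ is obtained from $S$ by identifying elements of the socles of the indecomposable summands. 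Since each finitely generated indecomposable separated summand has socle of dimension at most two (and dimension exactly one for the modules in families $(3)$ and $(4)$ of Corollary \ref{3.3.3}, which are the ones with an infinite-length component), the amalgamation must form a chain with the one-dimensional-socle pieces only at the two ends — this is the deleted-cycle pattern of Levy \cite{Levy2,Levy3}. Transcribing Levy's combinatorial bookkeeping of which socle elements get identified produces exactly the relations displayed in $(1)$--$(4)$.

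For the converse, I would take an arbitrary indecomposable non-separated pseudo-absorbing primary multiplication $M$ with finite-dimensional top, form its separated representation, and invoke Corollary \ref{444} and Corollary \ref{455} to pin down $S$ as a finite direct sum of modules of the four types in Corollary \ref{3.3.3} with at most two infinite-length summands and none equal to $R$ or $Q(R_i)$. Then I would run Levy's classification of the finitely generated indecomposable modules over a pullback ring \cite{Levy2,Levy3}: the indecomposability of $M$ forces the amalgamation graph to be connected and, by the socle-dimension constraints above, to be a simple chain (a path) rather than a cycle — this is the ``deleted cycle'' case, block-cycle modules being ruled out because they would require a summand of $S$ with socle of dimension $\ge 2$ in an internal position that is incompatible with the $P_i$-homogeneity of the identifications. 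Reading off the chain, together with Theorem \ref{422} which guarantees that amalgamating pseudo-absorbing primary multiplication separated modules along socles again yields a pseudo-absorbing primary multiplication module, shows $M$ is isomorphic to one of $(1)$--$(4)$.

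The two directions share the same technical core, so in practice I would prove the forward direction in detail and deduce the converse by the uniqueness (up to isomorphism) of the separated representation from \cite[Theorem 2.8]{Levy}, together with the observation that the listed modules are pairwise non-isomorphic (distinguished by the length data $m_i, n_i$ and by which ends carry infinite-length pieces). To verify indecomposability of each listed $M$, I would argue by contradiction: a nontrivial decomposition $M = M' \oplus M''$ would lift to a decomposition of the separated representation (using that $K \subseteq \mathrm{Soc}(S)$ and $\varphi$ is injective on each $P_iS$), splitting the amalgamation chain into two chains, which contradicts the connectedness built into the relations. The ``finite-dimensional top'' claim is immediate from Corollary \ref{inj}, since $S/PS \cong M/PM$.

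The main obstacle I anticipate is the faithful transcription of Levy's amalgamation calculus into the present setting: one must verify that every pseudo-absorbing primary multiplication separated module arising here has socle of the expected dimension (two for the torsion-torsion type $(2)$ of Corollary \ref{3.3.3}, one for the mixed types $(3)$ and $(4)$), that the socle identifications are $P_i$-homogeneous in the precise sense that a $P_1$-socle element of one summand can only be glued to a $P_2$-socle element of the next, and that the resulting amalgams never accidentally produce a block-cycle or split off an extra summand. Establishing that the socle-gluing operation preserves the pseudo-absorbing primary multiplication property is exactly Theorem \ref{422}, so that part is already in hand; the delicate point is the bookkeeping that shows the displayed exponent relations (and the exceptional cases $m_1, n_s \ge 1$ at the ends versus $m_i, n_i \ge 2$ internally) are forced, and that no other chains are possible.
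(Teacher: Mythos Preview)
Your proposal is correct and follows essentially the same route as the paper: both arguments hinge on Corollary \ref{444} and Corollary \ref{455} to reduce $S$ to a finite direct sum of the modules in Corollary \ref{3.3.3} (excluding $R$ and the $Q(R_i)$), then invoke Levy's deleted-cycle amalgamation picture \cite{Levy2,Levy3} to read off the chain relations, and use Theorem \ref{422} (together with Proposition \ref{2.3}(1)) to verify that the amalgamated modules remain pseudo-absorbing primary multiplication. The only substantive difference is in the indecomposability step: you propose a direct contradiction argument by lifting a hypothetical decomposition of $M$ back to $S$, whereas the paper simply cites \cite[1.9]{Levy3} and \cite[Theorem 3.5]{Ebrahimipure}; both work, and your version is more self-contained while the paper's is shorter.
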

\begin{proof}
{Suppose that $R$ is a pullback ring as described in \eqref{eq:1} and $0 \rightarrow K \xrightarrow{i} S \xrightarrow{\varphi} M \rightarrow 0$ is a separated representation of $M$ such that $M$ is indecomposable non-separated pseudo-absorbing primary multiplication $R$-module with finite-dimensional top over $\overline{R}$. Then $S$ is a direct sum of finitely many indecomposable pseudo-absorbing primary multiplication modules, by Corollary \ref{444}. But we have seen already that every indecomposable non-separated pseudo-absorbing primary multiplication $R$-module has one of these forms so it remains to show that the modules obtained by these amalgamation are, indeed, indecomposable pseudo-absorbing primary multiplication $R$-modules. The pseudo-absorbing primary multiplicity of these modules follows from Proposition \ref{2.3} $(1)$ and Theorem \ref{422}, and the indecomposability follows from \cite[1.9]{Levy3} and \cite[Theorem 3.5]{Ebrahimipure}. }
\end{proof}

We close this section with the following corollary.

\begin{cor}
Suppose that $R$ is a pullback ring as described in \eqref{eq:1}. Then every indecomposable pseudo-absorbing primary multiplication $R$-module with finite-dimensional top is pure-injective.
\end{cor}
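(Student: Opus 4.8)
The plan is to split according to whether the module is separated, reducing each case to the structure theory and pure-injectivity results already proved. If $M$ is a non-zero indecomposable separated pseudo-absorbing primary multiplication $R$-module with finite-dimensional top, then by Theorem~\ref{3.3.4} it is isomorphic to one of the modules listed in Corollary~\ref{3.3.3}, and Corollary~\ref{3.5.5} already records that each of these is pure-injective; so in the separated case there is nothing more to do.

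The substantive case is when $M$ is non-separated. Here I would fix a separated representation $0 \rightarrow K \xrightarrow{i} S \xrightarrow{\varphi} M \rightarrow 0$ of $M$, which exists and is unique up to isomorphism by \cite[Theorem 2.8]{Levy}. Since $M$ is pseudo-absorbing primary multiplication and non-separated, Theorem~\ref{422} shows that $S$ is pseudo-absorbing primary multiplication; since $M$ has finite-dimensional top so does $S$ (as $S/PS \cong M/PM$ by \cite[Proposition 2.6 (i)]{Ebrahimipure}); and since, by Corollary~\ref{444}, $S$ decomposes as a finite direct sum of indecomposable pseudo-absorbing primary multiplication modules, none of which is $R$, Corollary~\ref{inj} tells us that $S$ is pure-injective. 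It then remains to transfer pure-injectivity from $S$ down to the quotient $M = S/K$.

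This last step is the only delicate point, because a quotient of a pure-injective module need not be pure-injective in general. What makes it work is the very restricted shape of $K = \mathrm{Ker}(\varphi)$: being the kernel of a separated representation it is an $\overline{R}$-module with $K \subseteq PS$ by \cite[Proposition 2.3]{Levy}, and moreover $(0 :_R K) = P$ forces $K \subseteq \mathrm{Soc}(S)$ (as in the discussion preceding Theorem~\ref{main}). This is exactly the situation in which pure-injectivity of $S$ and of $M$ are equivalent, namely \cite[Proposition 2.6 (ii)]{Ebrahimipure}; applying that equivalence to the pure-injective module $S$ yields that $M$ is pure-injective, which completes the argument. Should one prefer a self-contained treatment, the alternative is to run through the explicit list of Theorem~\ref{main}, present each listed module as a finite amalgamation chain of the modules in Corollary~\ref{3.3.3}, and check directly that such a finite amalgam is pure-injective; this is routine but considerably longer, so the reduction via \cite[Proposition 2.6 (ii)]{Ebrahimipure} is the route I would take.
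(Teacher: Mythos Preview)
Your split into separated and non-separated cases is sound, and in fact your treatment of the separated case via Theorem~\ref{3.3.4} and Corollary~\ref{3.5.5} is more explicit than the paper's one-line proof, which cites only results aimed at the non-separated situation. (Both you and the paper leave the edge case $M\cong R$ unaddressed, since Corollary~\ref{3.5.5} explicitly excludes it.)

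For the non-separated case your primary route diverges from the paper's, and the divergence carries a genuine risk. The paper argues essentially along what you call your ``alternative'': Corollary~\ref{inj} gives that $S$ is pure-injective, Theorem~\ref{main} identifies $M$ as one of the listed amalgamation chains, and \cite[Theorem 3.5]{Ebrahimipure} is then invoked to conclude that each such chain is pure-injective. Your preferred route instead attempts to transfer pure-injectivity directly from $S$ down to $M=S/K$ by appealing to \cite[Proposition 2.6 (ii)]{Ebrahimipure} as an \emph{equivalence}. But as this very paper records (see the paragraph preceding Corollary~\ref{444}), that proposition supplies only the implication ``$M$ pure-injective $\Rightarrow$ $S$ pure-injective''; the reverse implication---precisely what you need---is not its content. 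Indeed, if that equivalence were available, the paper would have no reason to route through Theorem~\ref{main} and \cite[Theorem 3.5]{Ebrahimipure} at all. So the ``delicate point'' you flagged is genuinely delicate: the structural facts about $K$ (that $PK=0$, $K\subseteq\mathrm{Soc}(S)$, etc.) do not by themselves let you descend pure-injectivity to the quotient, and one really does need the classification-and-comparison step. Promote your alternative to the main argument and cite \cite[Theorem 3.5]{Ebrahimipure} for the final step; that is exactly the paper's proof.
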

\begin{proof}
{The proof follows from Corollary \ref{inj}, Theorem \ref{main} and \cite[Theorem 3.5]{Ebrahimipure}.
}
\end{proof}

\section{Conclusion}

This research contributes to the idea of pseudo-absorbing primary multiplication modules over the pullback ring $R = (R_1 \xrightarrow{\mathcal{V}_1(x)} \overline{R} \xleftarrow{\mathcal{V}_2(x)} R_2)$ of two discrete valuation domains $R_1,R_2$ with common factor field $\overline{R}$ and gives a complete description of these modules. The main results of this article include the classification of indecomposable pseudo-absorbing primary multiplication modules over $k[x, y : xy = 0]_{(x,y)}$ (a typical example of a pullback ring of two discrete valuation domains) where $k$ is a field with finite-dimensional top, which is divided into two steps: indecomposable separated pseudo-absorbing primary multiplication and indecomposable non-separated pseudo-absorbing primary multiplication $R$-modules. In the future, this work will be expanded to explore the indecomposable pseudo-absorbing primary multiplication modules with finite-dimensional top over the pullback ring of two discrete valuation domains $R_1,R_2$ onto a semi-simple artinian ring $\overline{R}$ and also over the pullback ring of two local rings $R_1$ and $R_2$ onto a field $\overline{R}$.




\begin{thebibliography}{}{\small

\bibitem {1} I. Assem, D. Simson and A. Skowronski, Elements of the Representation Theory of Associative Algebras, Vol. 1. Techniques of Representation Theory, London Math. Soc., Student Texts 65, Cambridge Univ. Press, Cambridge, 2007.

\bibitem {Badawi2}  A. Badawi, U. Tekir and E. Yetkin, On $2$-absorbing primary ideals in commutative
rings, Bull. Korean Math. Soc., 51(4) (2014), 1163--1173.

\bibitem {Bass} H. Bass, On the ubiquity of Gorenstein rings, Math. Z., 82 (1963), 8--29.

\bibitem {Bennis} D. Bennis, B. Fahid, Rings in which every $2$-absorbing ideal is prime, Beitr Algebra Geom, 2017.

\bibitem {5} W. Crawley-Boevey, Tame algebras and generic modules, Proc. London Math. Soc., 63 (1991), 241--265.

\bibitem {Drozd} Yu. A. Drozd, Matrix problems and categories of matrices, Zap. Nauchn. Sem. Leningrad. Otdel. Mat. Inst. Steklov., (LOMI) 28 (1972), 144--153 (in Russian).

\bibitem {Atani} S. Ebrahimi Atani, Indecomposable weak multiplication modules over Dedekind domains, Demonstratio Math., 41 (2008), 33--43.

\bibitem {Ebrahimipure} S. Ebrahimi Atani, On pure-injective modules over pullback rings, Comm. Algebra, 28 (2000), 4037--4069.

\bibitem {Ebrahimi} S. Ebrahimi Atani, On secondary modules over pullback rings, Comm. Algebra, 30 (2002), 2675--2685.

\bibitem {Ebrahimisemiprime} R. Ebrahimi Atani and S. Ebrahimi Atani, On semiprime multiplication modules over pullback rings, Comm. Algebra, 41 (2013), 776--791.

\bibitem {Esmaeili} S. Ebrahimi Atani and F. Esmaeili Khalil Saraei, On $P$-torsion modules over a pullback of two Dedekind domains, Georgian Math. J., 19 (2012), 473--488.

\bibitem {EsmaeiliSaraei} F. Esmaeili Khalil Saraei, On pseudo-prime multiplication modules over pullback rings, Colloq. Math., 143 (2016), 63--78.

\bibitem {Dolati} S. Ebrahimi Atani, S. Dolati Pish Hesari, M. Khoramdel, M. Sedghi Shanbeh Bazari, Pseudo-absorbing multiplication modules over a pullback ring, J. Pure Appl. Algebra, 222, no. 10 (2018), 3124--3136.

\bibitem {Bast} Z.A. El-Bast, P.F. Smith, Multiplication modules, Comm. Algebra, 16 (1988), 755--779.

\bibitem {Haefner1} J. Haefner and L. Klingler, Special quasi-triads and integral group rings of finite representation type I, J. Algebra, 158 (1993), 279--322.

\bibitem {Haefner2} J. Haefner and L. Klingler, Special quasi-triads and integral group rings of finite representation type II, J. Algebra, 158 (1993), 323--374.

\bibitem {Kaplansky} I. Kaplansky, Modules over Dedekind rings and valuation rings, Trans. Amer. Math. Soc., 72 (1952), 327--340.

\bibitem {Kirichenko} V. V. Kirichenko, Classification of the pairs of mutually annihilating operators in a graded space and representations of a dyad of generalized uniserial algebra, Zap. Nauchn. Sem. Leningrad. Otdel. Mat. Inst. Steklov., (LOMI) 75 (1978), 91--109, 196--197 (in Russian).

\bibitem {Klingler} L. Klingler, Integral representation of groups of square-free order, J. Algebra, 129 (1990), 26--74.

\bibitem {Levy} L. S. Levy, Modules over pullbacks and subdirect sums, J. Algebra, 71(1) (1981), 50--61.

\bibitem {Levy2} L. Levy, Modules over Dedekind-like rings, J. Algebra, 93 (1985), 1--116.

\bibitem {Levy3} L. Levy, Mixed modules over $\mathbb{Z}$G, G cyclic of prime order, and over related Dedekind pullbacks, J. Algebra, 71 (1981), 62--114.

\bibitem {MacCasland} R. L. MacCasland, M. E. Moore and P. F. Smith, On the spectrum of a module over a commutative ring, Comm. Algebra, 25 (1997), 79--103.

\bibitem {Moore} M. Moore and S. J. Smith, Prime and radical submodules of modules over commutative rings, Comm. Algebra, 10 (2002), 5037--5064.

\bibitem {Mostafanasab} H. Mostafanasab, E. Yetkin, U. Tekir, and A. Y. Darani, On $2$-absorbing primary submodules of modules over commutative rings, An. St. Univ. Ovidius Constanta, 24 (2016), no. 1, 335--351.

\bibitem {Prest} M. Prest, Model Theory and Modules, London Mathematical Society, Cambridge University Press, Cambridge, 1988.

\bibitem {15} M. Prest, Ziegler spectra of tame hereditary algebras, J. Algebra, 207 (1998), 146--164.

\bibitem {18}  C. M. Ringel, The Ziegler spectrum of a tame hereditary algebra, Colloq. Math., 76 (1998), 105--115.


\bibitem {28} D. Simson, On representation types of modules subcategories and orders, Bull. Polish Acad. Sci. Math., 41 (1993), 77--93.

\bibitem {29} D. Simson, On Corner type endo-wild algebras, J. Pure Appl. Algebra, 202 (2005), 118--132.

\bibitem {30} D. Simson and A. Skowronski, Elements of the Representation Theory of Associa- tive Algebras, Vol. 3. Representation-Infinite Tilted Algebras, London Math. Soc., Student Texts 72, Cambridge Univ. Press, Cambridge, 2007.

\bibitem {Wiseman} A. N. Wiseman, Projective modules over pullback rings, Math. Proc. Cambridge Philos. Soc., 97 (1985), 399--406.
}
\end{thebibliography}
\end{document}